\newtheorem{lemma}{Lemma}
\newtheorem*{lemma*}{Lemma}
\newtheorem{theorem}{Theorem}
\newtheorem*{theorem*}{Theorem}
\theoremstyle{definition}
\newtheorem{example}{Example}
\DeclareMathOperator\Ker{Ker}
\DeclareMathOperator{\id}{id}
\DeclareMathOperator{\Image}{Im}
\newcommand{\up}[2]{{^{#1}\!{#2}}}
\newcommand{\Set}{\mathbf{Set}}
\DeclareMathOperator{\Pro}{Pro}
\newcommand{\freeact}{\mathbin{\flat}}
\title{Actions of pro-groups and pro-rings}
\author{
  Egor Voronetsky\thanks{Research is supported by the Russian Science Foundation grant 19-71-30002.} \\
  Chebyshev Laboratory, \\
  St. Petersburg State University, \\
  14th Line V.O., 29B, \\
  Saint Petersburg 199178 Russia \\
}
\begin{document}
\maketitle

\begin{abstract}
We give an explicit description of internal actions in the semi-abelian categories of pro-groups and non-unital pro-rings in terms of actions of group objects and ring objects in \(\Pro(\Set)\), as well as in some related categories. Also, we show that a similar result fails for Lie algebras.
\end{abstract}

\section{Introduction}

Pro-completions of the categories of algebraic objects are widely used in algebraic geometry \cite{Etale}, algebraic topology \cite{Shape}, and recently in algebraic \(K\)-theory \cite{CentralityK2}. From the main result of \cite{Janelidze} it follows that the categories of pro-groups \(\Pro(\mathbf{Grp})\), non-unital associative pro-rings \(\Pro(\mathbf{Rng})\), and Lie pro-\(K\)-algebras \(\Pro(\mathbf{Lie}_K)\) over a commutative ring \(K\) are semi-abelian, i.e. there are naturally defined internal actions in these categories. For example, we may consider algebras over a given pro-ring \(R\), i.e. pro-rings with an action of \(R\).

In this paper we answer to the natural question, how to describe actions of a given pro-object \(G\) on a pro-object \(X\), in the cases of pro-groups and pro-rings, in theorems \ref{act-pro-group}--\ref{act-pro-crng}. Namely, actions of \(G\) on \(X\) in the sense of semi-abelian categories (i.e. the isomorphism classes of split extensions of \(G\) by \(X\)) are the same as classical actions of \(G\) on \(X\) as algebraic objects in \(\Pro(\Set)\), i.e. they may be defined by finitely many morphisms between pro-sets satisfying some axioms. Our approach fails for Lie pro-algebras, we provide a counterexample.

For example, our main result (theorem \ref{act-pro-group}) implies that any pro-group with an automorphism of a finite order \(n\) is isomorphic to an inverse system of groups with automorphisms of order \(n\).

We mainly refer to \cite{BB} for the semi-abelian categories and to \cite[chapter I, \S 1]{Shape} for the pro-completions.

\section{Pro-sets}

If \(\mathcal C\) is a category, then we write \(X \in \mathcal C\) if \(X\) is an object of \(\mathcal C\). For any algebraic theory \(T\) and a cartesian category \(\mathcal C\) we denote by \(T(\mathcal C)\) the category of \(T\)-objects in \(\mathcal C\) and homomorphisms between them. Also, the category of \(T\)-algebras \(T(\Set)\) is denoted by \(\mathbf T\). We need the algebraic theories
\begin{itemize}
\item \(\mathrm{Grp}\) of groups;
\item \(\mathrm{Ab}\) of abelian groups;
\item \(\mathrm{Rng}\) of non-unital associative rings (we call them rings for simplicity);
\item \(\mathrm{Ring}\) of unital rings;
\item \(\mathrm{CRng}\) of commutative rings;
\item \(\mathrm{CRing}\) of unital commutative rings;
\item \(\mathrm{Mod}_K\) of modules over a unital commutative ring \(K\);
\item \(\mathrm{Lie}_K\) of Lie algebras over a unital commutative ring \(K\).
\end{itemize}
Also, we denote by \(\mathrm{Alg}(\mathcal C)\) the category of unital algebras in \(\mathcal C\). Namely, objects of \(\mathrm{Alg}(\mathcal C)\) are the morphisms \(f \colon K \to R\) in \(\mathrm{Ring}(\mathcal C)\) such that \(f\) is a monomorphism in \(\mathcal C\) and \(f\) is central, i.e. the ring \(\mathcal C(U, K)\) is central in \(\mathcal C(U, R)\) for any \(U \in \mathcal C\). Morphisms from \(f \colon K \to R\) to \(f' \colon K' \to R'\) are the morphisms \(h \in \mathrm{Ring}(\mathcal C)(R, R')\) such that \(h \circ f\) factors through \(f'\). As for algebraic theories, \(\mathbf{Alg} = \mathrm{Alg}(\Set)\).

Recall that a category \(\mathcal I\) is called \textit{filtered} if it is non-empty, for any two objects \(i, j \in \mathcal I\) there is an object \(k \in \mathcal I\) with morphisms \(\varphi \colon i \to k\) and \(\psi \colon j \to k\), and for any parallel morphisms \(\varphi, \psi \colon i \to j\) in \(\mathcal I\) there is a morphism \(\theta \colon j \to k\) such that \(\theta \circ \varphi = \theta \circ \psi\). A functor \(u \colon \mathcal I \to \mathcal J\) between filtered categories is called \textit{cofinal} if for any \(j \in \mathcal J\) there is \(i \in \mathcal I\) and a morphism \(j \to u(i)\). A filtered category \(\mathcal I\) is called a \textit{directed set} if it is small, skeletal, and for any two objects \(i, j \in \mathcal I\) there is at most one morphism \(i \to j\), so we may consider \(\mathcal I\) as a poset. A directed set \(I\) is called \textit{cofinite} if for any \(i \in I\) there is only finitely many elements less than \(i\). If \(\mathcal I\) is a small filtered category, then there exists a cofinite directed set \(J\) with a cofinal functor \(J \to \mathcal I\), see \cite[chapter I, \S 1.4, theorem 4]{Shape}.

An \textit{inverse system} \(X\) in a category \(\mathcal C\) is a contravariant functor \(\mathcal I_X \to \mathcal C\), where \(\mathcal I_X\) is a small filtered category. A \textit{morphism} of inverse systems \(f \colon X \to Y\) consists of a map \(f^*\) from the set of objects of \(\mathcal I_Y\) to the set of objects of \(\mathcal I_X\) and morphisms \(f_i \colon X_{f^*(i)} \to Y_i\) for all \(i \in \mathcal I_Y\) such that for all \(\varphi \in \mathcal I_Y(i, j)\) there is sufficiently large \(k \in \mathcal I_X\) with morphisms \(\psi \in \mathcal I_X(k, f^*(i))\) and \(\theta \in \mathcal I_X(k, f^*(j))\) satisfying \(f_i \circ X_\psi = Y_\varphi \circ f_j \circ X_\theta \colon X_k \to Y_i\). Clearly, the inverse systems and their morphisms form a category. A \textit{pro-completion} \(\Pro(\mathcal C)\) is its factor-category by the following equivalence relation: two morphisms \(f, g \colon X \to Y\) of inverse systems are \textit{equivalent} if for all \(i \in \mathcal I_Y\) there is sufficiently large \(j \in \mathcal I_X\) and morphisms \(\varphi \in \mathcal I_X(j, f^*(i))\), \(\psi \in \mathcal I_X(j, g^*(i))\) such that \(f_i \circ X_\varphi = g_i \circ X_\psi \colon X_j \to Y_i\). By \cite[chapter I, \S 1.1, remark 4]{Shape},
\[\Pro(C)(X, Y) = \varprojlim_{j \in \mathcal I_Y} \varinjlim_{i \in \mathcal I_X} \mathcal C(X_i, Y_j).\]

If \(X\) is an inverse system in \(\mathcal C\) and \(u \colon \mathcal J \to \mathcal I_X\) is a cofinal functor, then the induced morphism of inverse systems \(f \colon X \to u^* X = (X \circ u)\) given by \(f^*(j) = u(j)\) and \(f_j = \id_{X_{u(j)}}\) is an isomorphism in \(\Pro(\mathcal C)\). In particular, any object in \(\Pro(\mathcal C)\) is isomorphic to an inverse system indexed by a cofinite directed set. If \(\mathcal I_X\) is a directed set, \(i \leq j\) are two indices in \(\mathcal I_X\), and \(x \in X_j\), then we denote the image of \(x\) in \(X_i\) by \(x|_i\).

A morphism \(f \colon X \to Y\) of inverse systems is called a \textit{level morphism} if \(\mathcal I_X = \mathcal I_Y\), \(f^*(i) = i\) for all \(i\), and \(Y_\varphi \circ f_i = f_j \circ X_\varphi \colon X_i \to Y_j\) for all \(\varphi \in \mathcal I_X(i, j)\). Each morphism in \(\Pro(\mathcal C)\) is isomorphic to a level morphism, i.e. for any \(f \colon X \to Y\) there is a level morphism \(f' \colon X' \to Y'\) and isomorphisms \(X \to X'\), \(Y \to Y'\) in \(\Pro(\mathcal C)\) making the resulting square commutative in \(\Pro(\mathcal C)\), see \cite[chapter I, \S 1.3, theorem 3]{Shape}. Moreover, here we may assume that \(X \to X'\) and \(Y \to Y'\) are induced by cofinal morphisms from a cofinite directed poset. By \cite[chapter II, \S 2.2, theorem 5]{Shape}, a level morphism \(f \colon X \to Y\) of inverse systems is an isomorphism if and only if for any \(i \in \mathcal I_X\) there are \(j \in \mathcal I_X\), \(\varphi \colon i \to j\), and \(u \colon Y_j \to X_i\) such that \(f_i \circ u = Y_\varphi\) and \(u \circ f_j = X_\varphi\).

Any object \(X \in \mathcal C\) may be considered as the inverse system over the terminal category with only one index \(*\). The canonical embedding functor \(\mathcal C \to \Pro(\mathcal C)\) is fully faithful and every inverse system \(X\) is the projective limit of itself in \(\Pro(\mathcal C)\). We denote the projective limit of an inverse system \(X\) in \(\Pro(\mathcal C)\) by \(\varprojlim_{i \in \mathcal I_X}^{\Pro(\mathcal C)} X_i\), where \(X_i\) are themselves some pro-objects, all such limits necessary exist.

By \cite[corollary 6.1.17, proposition 6.1.18]{KSch}, if \(\mathcal C\) is finitely complete or finitely cocomplete, then \(\Pro(\mathcal C)\) is also finitely complete or finitely cocomplete respectively and the limits and colimits of finite level diagrams may be computed levelwise. Also, if \(f \colon X \to Y\) is a level morphism of inverse systems such that all \(f_i\) are monomorphisms or epimorphisms in \(\mathcal C\), then \(f\) is a monomorphism or an epimorphism in \(\Pro(\mathcal C)\) by \cite[propositions 2.3 and 2.4]{Dydak}.

Now let us recall the definition of regular categories from \cite{Handbook}. A \textit{categorical equivalence relation} on an object \(X\) in a finitely complete category \(\mathcal C\) is an object \(R\) with a monomorphism \(R \to X \times X\) such that \(\mathcal C(U, R)\) is an equvalence relation on \(\mathcal C(U, X)\) for all \(U \in \mathcal C\). If \(f \in \mathcal C(X, Y)\), then its \textit{kernel pair} \(R = \lim(X \xrightarrow{f} Y \xleftarrow{f} X)\) is a categorical equivalence relation on \(X\). A morphism \(f\) is called a \textit{regular epimorphism} if it is the coequalizer of a kernel pair considered as a pair of parallel morphisms to the source of \(f\). A finitely complete category \(\mathcal C\) is called \textit{regular} if all kernel pairs of morphisms have coequalizers and regular epimorphisms are preserved under pullbacks. In a regular category every morphism \(f \colon X \to Y\) admits an \textit{image decomposition} \(X \to \Image(f) \to Y\), where \(X \to \Image(f)\) is a regular epimorphism and \(\Image(f) \to Y\) is a monomorphism, such a decomposition is unique up to a unique isomorphism and functorial on \(f\). A regular category \(\mathcal C\) is called \textit{Barr exact} if, in addition, all categorical equivalence relations in \(\mathcal C\) are kernel pairs.

\begin{lemma} \label{image-dec}
Let \(\mathcal C\) be a regular category and \(f \colon X \to Y\) be a level morphism of inverse systems in \(\mathcal C\) with the index category \(\mathcal I\). Then \(X \to \varprojlim_{i \in \mathcal I} \Image(f_i) \to Y\) is the image decomposition of \(f\) in \(\Pro(\mathcal C)\). In particular, if all \(f_i\) are regular epimorphisms in \(\mathcal C\), then \(f\) is a regular epimorphism in the pro-completion.
\end{lemma}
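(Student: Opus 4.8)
The plan is to verify directly that the factorization $X \to Z \to Y$, where $Z := \varprojlim_{i \in \mathcal I} \Image(f_i)$ with the obvious level morphisms, satisfies the two defining properties of an image decomposition in $\Pro(\mathcal C)$: the first map is a regular epimorphism and the second is a monomorphism. Since image decompositions in a regular category are unique up to unique isomorphism, this suffices. The map $Z \to Y$ is a level morphism whose components $\Image(f_i) \to Y_i$ are monomorphisms in $\mathcal C$ by the image decomposition in $\mathcal C$, hence $Z \to Y$ is a monomorphism in $\Pro(\mathcal C)$ by the cited result from \cite{Dydak}. So the heart of the matter is to show $e \colon X \to Z$, the level morphism with components the regular epimorphisms $e_i \colon X_i \to \Image(f_i)$, is a regular epimorphism in $\Pro(\mathcal C)$.

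First I would reduce to a statement about kernel pairs. A regular epimorphism is by definition the coequalizer of its kernel pair. The kernel pair of $e$ in $\Pro(\mathcal C)$ can be computed levelwise, since finite limits of level diagrams are computed levelwise in $\Pro(\mathcal C)$: it is the inverse system $R$ with $R_i = \Ker\text{-pair}(e_i) = X_i \times_{\Image(f_i)} X_i$, which, because $e_i$ is a regular epimorphism, is the same as $X_i \times_{Y_i} X_i$ (as $e_i$ and $f_i$ have the same image, $e_i$ is a regular epi, and in a regular category the kernel pair of a morphism equals the kernel pair of the regular-epi part of its image factorization). Thus $R$ is also the kernel pair of $f$ itself, viewed levelwise, and of $X \to Y$ in $\Pro(\mathcal C)$. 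So it remains to show that $X \to Z$ is the coequalizer in $\Pro(\mathcal C)$ of the pair $R \rightrightarrows X$.

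For this I would use the universal property directly: given a morphism $g \colon X \to W$ in $\Pro(\mathcal C)$ that coequalizes $R \rightrightarrows X$, I must produce a unique $\bar g \colon Z \to W$ with $\bar g \circ e = g$. After replacing $g$ by an isomorphic level morphism (refining indices so that $W$ is indexed by a common cofinite directed set and the coequalizing identity holds levelwise up to the pro-equivalence), each component $g_i \colon X_i \to W_i$ coequalizes $R_i \rightrightarrows X_i$ — at least after passing to a cofinal subsystem, which is where the filtered-colimit bookkeeping of $\Pro(\mathcal C)$-morphisms enters. Since $e_i$ is the coequalizer of $R_i \rightrightarrows X_i$ in $\mathcal C$, it factors as $g_i = \bar g_i \circ e_i$ for a unique $\bar g_i$, and these assemble into the desired level morphism $\bar g \colon Z \to W$; uniqueness of $\bar g$ in $\Pro(\mathcal C)$ follows from $e$ being an epimorphism (as each $e_i$ is). The final sentence of the lemma is then immediate: if every $f_i$ is already a regular epimorphism, then $\Image(f_i) = Y_i$ and $Z \cong Y$, so $f = e$ is a regular epimorphism.

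I expect the main obstacle to be the colimit bookkeeping in the third step: a $\Pro(\mathcal C)$-morphism is only represented by a compatible family of $\mathcal C$-morphisms up to passing to cofinal indices and up to the pro-equivalence relation, so the naive claim "$g_i$ coequalizes $R_i \rightrightarrows X_i$" holds only after refining the index category and possibly enlarging indices. Making this precise — showing that one can choose representatives so that the levelwise factorizations are genuinely compatible, and that the resulting $\bar g$ is well-defined independently of these choices — is the delicate part; it is handled by the standard technique of replacing all morphisms involved by level morphisms over a common cofinite directed set (using the cited theorems from \cite{Shape}) and then chasing the filtered-colimit definition of $\Pro(\mathcal C)(X,Y)$.
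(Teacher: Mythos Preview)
Your proposal is correct and follows the same overall route as the paper: show that $Z \to Y$ is a monomorphism as a level morphism of monomorphisms, identify the kernel pair of $f$ (equivalently of $e$) levelwise as $R = \varprojlim_i R_i$, and then verify that $X \to Z$ is the coequalizer of $R \rightrightarrows X$.

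The only noteworthy difference is in how the coequalizer step is executed. You propose to argue by hand with representatives, and you correctly anticipate that the pro-equivalence bookkeeping is the delicate part. The paper bypasses this entirely: it uses the hom-set formula
\[
\Pro(\mathcal C)(A, W) = \varprojlim_{j \in \mathcal I_W} \varinjlim_{i \in \mathcal I_A} \mathcal C(A_i, W_j)
\]
and observes that since $\mathcal C(Z_i, W_j)$ is the equalizer of $\mathcal C(X_i, W_j) \rightrightarrows \mathcal C(R_i, W_j)$ (because $Z_i$ is the coequalizer of $R_i \rightrightarrows X_i$ in $\mathcal C$), and since both filtered colimits and projective limits in $\Set$ commute with equalizers, the required universal property of $Z$ in $\Pro(\mathcal C)$ follows immediately. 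This one-line argument replaces all the index-chasing you describe, so if you adopt it you can drop the entire third paragraph of obstacles.
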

\begin{proof}
Since the image decomposition is functorial, the pro-object \(Z = \varprojlim_{i \in \mathcal I} \Image(f_i)\) is well-defined. Now the morphism \(Z \to Y\) is a monomorphism as the formal projective limit of monomorphisms \(\Image(f_i) \to Y_i\) in \(\mathcal C\). The kernel pair of \(f\) in \(\Pro(\mathcal C)\) is \(R = \varprojlim_{i \in \mathcal I} R_i\), where \(R_i\) is the kernel pair of \(f_i\) in \(\mathcal C\). It remains to check that \(X \to Z\) is the coequalizer of the morphisms \(R \rightrightarrows X\). But this easily follows from the formula for the set of morphisms in \(\Pro(\mathcal C)\) since projective and direct limits of sets commute with equalizers.
\end{proof}

Recall that the monomorphisms in \(\Set\) and in all the categories of algebraic objects \(\mathbf T\) listed above (excluding \(\mathbf{Alg}\)) are precisely the injective maps, the regular epimorphisms are the surjective maps. By \cite[example 1.11]{Janelidze}, the pro-completion of a regular category is also regular. It follows that the category of pro-sets \(\Pro(\Set)\) is regular and finitely cocomplete, and similarly for various \(\Pro(\mathbf T)\). Unlike \(\Set\), the category \(\Pro(\Set)\) is not Barr exact, as the following example shows.

\begin{example}
Let \(R_n = \{(x, y) \in \mathbb R^2 \mid x - 1/n < y < x + 1/n\}\) for \(n > 0\) and \(R = \varprojlim_n^{\Pro(\Set)} R_n\) be their formal projective limit. Then \(R\) is a categorical equivalence relation on \(\mathbb R\) in \(\Pro(\Set)\) since all \(R_n\) are reflexive symmetric relations on \(\mathbb R\) and \(R_{2n} \circ R_{2n} \subseteq R_n\). Now suppose that \(f \colon \mathbb R \to X\) is a morphism of inverse systems with the kernel pair containing \(R\) in the category \(\Pro(\Set)\). This means that for any \(i \in \mathcal I_X\) there is \(n > 0\) such that \(f_i(x) = f_i(y)\) for all \((x, y) \in R_n\). Then it easily follows that all \(f_i\) have one-element images, i.e. the kernel pair of \(f\) is \(\mathbb R^2\). On the other hand, \(R \to \mathbb R^2\) is not an isomorphism of pro-sets since the embeddings \(R_n \to \mathbb R^2\) of sets do not have sections. In other words, \(R\) is a categorical equivalence relation but not a kernel pair in \(\Pro(\Set)\).
\end{example}

We also need the notion of homological and semi-abelian categories from \cite[sections 4 and 5]{BB}. A category \(\mathcal C\) is called \textit{pointed} if there is a zero object \(0\), i.e. it is simultaneously initial and terminal. A \textit{split extension} of \(Z\) by \(X\) in a finitely complete pointed category \(\mathcal C\) is a diagram of type
\[\xymatrix@R=24pt@C=48pt@!0{
X \ar@{>->}[r]^{i} & Y \ar@{->>}@<-3pt>[r]_{p} & Z \ar@{>->}@<-3pt>[l]_{s},
}\]
where \(p \circ s = \id_Z\) and \(i\) is the \textit{kernel} of \(p\), i.e. the equalizer of \(p\) and \(0 \colon X \to Y\). The object \(Y\) is called a \textit{semi-direct product} of \(X\) and \(Z\), it is also denoted by \(X \rtimes Z\). If
\[\xymatrix@R=24pt@C=48pt@!0{
X \ar@{>->}[r]^{i'} & Y' \ar@{->>}@<-3pt>[r]_{p'} & Z \ar@{>->}@<-3pt>[l]_{s'}
}\]
is another split extension and \(f \in \mathcal C(Y, Y')\) satisfies \(f \circ i = i'\), \(f \circ s = s'\), and \(p' \circ f = p\), then \(f\) is called a \textit{morphism} of split extensions. A finitely complete pointed category \(\mathcal C\) is called \textit{protomodular} if every morphism between split extensions of any objects is an isomorphism. A category \(\mathcal C\) is called \textit{homological} if it is pointed, regular, and protomodular (in particular, it is finitely complete). If, in addition, \(\mathcal C\) is Barr exact and finitely cocomplete, then it is called \textit{semi-abelian}.

If \(X\) and \(G\) are objects of a homological category \(\mathcal C\), then an \textit{action} of \(G\) on \(X\) is an isomorphism class of split extensions \(X \to Y \leftrightarrows G\). Each object \(G\) canonically acts on itself via
\[\xymatrix@R=24pt@C=48pt@!0{
G \ar@{>->}[r]^(0.4){i_1} & G \times G \ar@{->>}@<-3pt>[r]_(0.6){p_2} & G \ar@{>->}@<-3pt>[l]_(0.4){\Delta},
}\]
where \(i_1 \colon G \cong G \times 0 \to G \times G\) is the left embedding, \(p_2 \colon G \times G \to 0 \times G \cong G\) is the right projection, and \(\Delta \colon G \to G \times G\) is the diagonal embedding. If \(G\) and \(X\) are any objects in a semi-abelian category, then \(G\) canonically acts on \(G \freeact X = \Ker(G \amalg X \to G \amalg 0 \cong G)\) and the actions of \(G\) on \(X\) are in a one-to-one correspondence with the morphisms \(G \freeact X \to X\) satisfying some additional conditions, see \cite{Actions} for details. In particular, there is only a set of possible actions of \(G\) on \(X\).

The categories \(\mathbf{Grp}\), \(\mathbf{Ab}\), \(\mathbf{Rng}\), \(\mathbf{CRng}\), \(\mathbf{Lie}_K\), and \(\mathbf{Mod}_K\) are semi-abelian. By \cite[the list after example 1.12]{Janelidze}, the pro-completions of these categories (or any other semi-abelian category) are also semi-abelian.

\section{Pro-groups and pro-rings}

Let \(T \neq \mathrm{Alg}\) be one of the algebraic theories listed above. The category \(\mathbf T\) may be extended to the case of pro-sets in two canonical ways, namely, to its pro-completion \(\Pro(\mathbf T)\) and to the category \(T(\Pro(\Set))\). Both these categories are regular, in \(T(\Pro(\Set))\) monomorphisms, regular epimorphisms, and the image decompositions are the same as in \(\Pro(\Set)\). Clearly, there are canonical functors \(\mathbf T \to \Pro(\mathbf T) \to T(\Pro(\Set))\), the left one is fully faithful.

\begin{lemma} \label{pro-group}
In the diagram
\[\xymatrix@R=24pt@C=72pt@!0{
\Pro(\mathbf{Grp}) \ar[r] & \mathrm{Grp}(\Pro(\Set)) \\
\Pro(\mathbf{Ab}) \ar[u] \ar[r] & \mathrm{Ab}(\Pro(\Set)) \ar[u]
}\]
all functors are fully faithful. A pro-group \(G\) is isomorphic to an abelian pro-group if and only if it is an abelian group object in \(\Pro(\Set)\).
\end{lemma}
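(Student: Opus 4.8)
The plan is to treat the square of functors first and the characterisation of abelian pro-groups second; both conditions in the second statement are invariant under isomorphism in $\Pro(\mathbf{Grp})$ (using the functor to $\mathrm{Grp}(\Pro(\Set))$), so one is free to replace $G$ by any isomorphic inverse system.

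\textbf{Full faithfulness of the four functors.} The elementary input is that $\Pro$ preserves full faithfulness: if $F\colon\mathcal C\to\mathcal D$ is fully faithful, then $\Pro(\mathcal D)(\Pro(F)X,\Pro(F)Y)=\varprojlim_j\varinjlim_i\mathcal D(FX_i,FY_j)=\varprojlim_j\varinjlim_i\mathcal C(X_i,Y_j)=\Pro(\mathcal C)(X,Y)$ by the formula for hom-sets in a pro-completion. Applied to the inclusion $\mathbf{Ab}\hookrightarrow\mathbf{Grp}$ this handles the left vertical functor. The right vertical functor is the inclusion of the full subcategory $\mathrm{Ab}(\Pro(\Set))\subseteq\mathrm{Grp}(\Pro(\Set))$ cut out by symmetry of the multiplication, a property rather than extra structure on morphisms, hence fully faithful. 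For the top horizontal functor $\Pro(\mathbf{Grp})\to\mathrm{Grp}(\Pro(\Set))$ I would argue by hand. Faithfulness follows by composing further with the forgetful functor to $\Pro(\Set)$: the resulting map on hom-sets, $\varprojlim_j\varinjlim_i\mathbf{Grp}(X_i,Y_j)\to\varprojlim_j\varinjlim_i\Set(X_i,Y_j)$, is injective since $\mathbf{Grp}\to\Set$ is faithful, filtered colimits in $\Set$ preserve monomorphisms, and limits preserve monomorphisms. For fullness, let $f$ be a morphism of group objects in $\Pro(\Set)$ between pro-groups $X,Y$ and represent its component at $j\in\mathcal I_Y$ by a set map $\hat f_j\colon X_{i_j}\to Y_j$; reading the identity $f\circ m_X=m_Y\circ(f\times f)$ at level $j$ inside $\varinjlim_i\Set(X_i\times X_i,Y_j)$ and using that the transition maps of $X$ are group homomorphisms, one finds a transition $X_{i'_j}\to X_{i_j}$ after precomposing with which the representing map becomes multiplicative, hence a group homomorphism, and still represents the $j$-th component of $f$. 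Thus every component of $f$ lifts along the injective map $\varinjlim_i\mathbf{Grp}(X_i,Y_j)\to\varinjlim_i\Set(X_i,Y_j)$, the lift is unique, hence the lifts are compatible in $j$, and $f$ lifts to $\Pro(\mathbf{Grp})(X,Y)$. Finally, the bottom horizontal functor is fully faithful by a diagram chase: the composites $\Pro(\mathbf{Ab})\to\Pro(\mathbf{Grp})\to\mathrm{Grp}(\Pro(\Set))$ and $\Pro(\mathbf{Ab})\to\mathrm{Ab}(\Pro(\Set))\to\mathrm{Grp}(\Pro(\Set))$ coincide, the first is fully faithful and the last is faithful, so $\Pro(\mathbf{Ab})\to\mathrm{Ab}(\Pro(\Set))$ is fully faithful.

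\textbf{Characterisation of abelian pro-groups.} If $G$ is isomorphic to an abelian pro-group, then it is an abelian group object in $\Pro(\Set)$, since an inverse system of abelian groups has levelwise symmetric multiplication and symmetry then persists in $\Pro(\Set)$ and under the isomorphism. Conversely, assume $G$, which we may take to be an inverse system $(G_i)_{i\in I}$ over a cofinite directed poset, is an abelian group object in $\Pro(\Set)$. Since two level morphisms agree in a pro-completion exactly when they agree after restriction to a sufficiently deep level, the identity $m_G=m_G\circ\mathrm{swap}$ says precisely that for every $i\in I$ there is $j\geq i$ with $\Image(G_j\to G_i)$ an abelian subgroup of $G_i$. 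Consider the level morphism $q\colon G\to G^{\mathrm{ab}}$ with $G_i^{\mathrm{ab}}=G_i/[G_i,G_i]$ and $q_i$ the quotient map; its target is an abelian pro-group. To see $q$ is an isomorphism I would apply the recognition criterion for level isomorphisms quoted in the excerpt: given $i$, pick $j\geq i$ with abelian transition image; that transition $G_j\to G_i$ kills $[G_j,G_j]$, so factors as $u\circ q_j$ for a unique $u\colon G_j^{\mathrm{ab}}\to G_i$ (one of the two equalities required), and composing with $q_i$ and cancelling the surjection $q_j$ gives that $q_i\circ u$ equals the transition $G_j^{\mathrm{ab}}\to G_i^{\mathrm{ab}}$ (the other). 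Hence $G\cong G^{\mathrm{ab}}$ lies in $\Pro(\mathbf{Ab})$.

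\textbf{Main obstacle.} The delicate point is the fullness of the horizontal functors: one must upgrade a morphism that exists only ``in the limit'' of sets to genuine level homomorphisms, and the bookkeeping of which transition one precomposes with --- so that the rectified level maps still represent the given morphism and remain coherent over the target index --- is where the work is. By comparison the abelianization argument is brief; its only subtlety, translating internal commutativity in $\Pro(\Set)$ into the statement that transition images are eventually abelian, is smoothed by passing to a cofinite directed poset at the outset.
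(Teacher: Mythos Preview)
Your proof is correct and follows essentially the same approach as the paper's: the paper likewise reduces the full-faithfulness claim to fullness of the top functor (declaring faithfulness and the vertical arrows ``clear'') and proves that fullness by the same refinement-until-the-representing-map-is-a-homomorphism argument, and your abelianization argument using the level-isomorphism criterion is identical to the paper's. You have simply filled in more detail for the steps the paper leaves implicit.
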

\begin{proof}
Clearly, the horizontal functors are faithful and the vertical ones are fully faithful, so in order to prove the first claim it suffice to check that the upper functor is full. Let \(G\) and \(H\) be pro-groups and \(f \colon G \to H\) be a morphism of inverse systems of sets such that it is a homomorphism of group objects in \(\Pro(\Set)\). This means that for every \(i \in \mathcal I_H\) there is \(j \in \mathcal I_G\) and \(\varphi \colon f^*(i) \to j\) such that \(f_i \circ G_\varphi \colon G_j \to H_i\) is a group homomorphism. In other words, \(f\) is equivalent to a morphism of inverse systems of groups.

Now let \(G\) be a pro-group such that it is an abelian group object in \(\Pro(\Set)\). Let \(G'_i = G_i / [G_i, G_i]\) be its levelwise abelianization and \(f \colon G \to G'\) be the corresponding level morphism. Since \(G\) is an abelian group object, for any \(i \in \mathcal I_G\) there are \(j \in \mathcal I_G\) and \(\varphi \colon i \to j\) such that \(G_\varphi(xy) = G_\varphi(yx)\), i.e. \(G_\varphi = u_i \circ f_j\) for some \(u_i \colon G'_j \to G_i\). Now it is easy to see that \(f\) is an isomorphism of pro-groups with the inverse given by the family of \(u_i\).
\end{proof}

In turns out that the canonical functor \(\Pro(\mathbf{Grp}) \to \mathrm{Grp}(\Pro(\Set))\) is not an equivalence.

\begin{example}
Let \(X = \varprojlim^{\Pro(\Set)}_{n > 0} X_n\), where \(X_n = (-1/n, 1/n)\) are the intervals in \(\mathbb R\) and the maps between them are the inclusions. It has an abelian group object structure:
\begin{itemize}
\item The neutral element is \(z \colon \{*\} \to X\), \(z^*(n) = *\), \(z_n(*) = 0 \in X_n\).
\item The inverse is \(i \colon X \to X\), \(i^*(n) = n\), \(i_n(x) = -x\).
\item The addition is \(s \colon X \times X \to X\), \(s^*(n) = 2n\), \(s_n(x, y) = x + y\).
\end{itemize}
Suppose that \(f \colon G \to X\) is a homomorphism of group objects in \(\Pro(\Set)\) such that \(G\) is a pro-group. This means, in particular, that for every \(n\) there is \(i \in \mathcal I_G\) and \(\varphi \colon f^*(n) \to i\) such that \(f_n(G_\varphi(gg')) = f_n(G_\varphi(g)) + f_n(G_\varphi(g'))\). But since \(-1/n < f_n(G_\varphi(g^k)) < 1/n\) for all \(k \geq 1\), we necessarily have \(f_n(G_\varphi(g)) = 0\) for all \(g \in G_i\). In other words, \(f\) is equivalent to the trivial homomorphism. On the other hand, \(X\) is not trivial, so it is not isomorphic to a pro-group.
\end{example}

\begin{lemma} \label{pro-lie}
Let \(K\) be a localization of a finitely generated unital commutative ring such as \(\mathbb Z\), \(\mathbb Q\), or a finite field. Then in the diagram
\[\xymatrix@R=24pt@C=84pt@!0{
\Pro(\mathbf{Lie}_K) \ar[r] & \mathrm{Lie}_K(\Pro(\Set)) \\
\Pro(\mathbf{Mod}_K) \ar[u] \ar[r] & \mathrm{Mod}_K(\Pro(\Set)) \ar[u]
}\]
all functors are fully faithful, where the vertical functors add the zero Lie bracket. A Lie pro-\(K\)-algebra \(G\) is isomorphic to a pro-\(K\)-module if and only if it is an abelian Lie \(K\)-algebra object in \(\Pro(\Set)\).
\end{lemma}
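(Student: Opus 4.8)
The plan is to follow the proof of Lemma~\ref{pro-group} essentially verbatim, with commutator subalgebras and abelianisations of Lie algebras in place of their group-theoretic counterparts, the one new ingredient being the use of the hypothesis on $K$. As in that proof, the horizontal functors are clearly faithful and the vertical ones are fully faithful (a $K$-linear morphism between Lie $K$-algebra objects with zero bracket preserves the bracket, and a Lie homomorphism between such objects is $K$-linear by definition), so it suffices to show that the upper horizontal functor $\Pro(\mathbf{Lie}_K) \to \mathrm{Lie}_K(\Pro(\Set))$ is full and to prove the characterisation of pro-$K$-modules among Lie pro-$K$-algebras.

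For fullness, write $K = S^{-1}K_0$ with $K_0$ a finitely generated unital commutative ring, generated as such by $t_1, \dots, t_n$. The elementary point is that an additive map $f \colon M \to N$ between $K$-modules is already $K$-linear as soon as it commutes with multiplication by each of $t_1, \dots, t_n$: it is then $K_0$-linear, and a $K_0$-linear map between $K$-modules is automatically $K$-linear because every element of $S$ acts invertibly on $M$ and on $N$. Hence a morphism $f \colon G \to H$ of inverse systems of sets underlying Lie pro-$K$-algebras is a homomorphism of Lie $K$-algebra objects in $\Pro(\Set)$ if and only if it commutes in $\Pro(\Set)$ with the finitely many structure morphisms, namely the addition, the Lie bracket, and the $n$ scalar multiplications by $t_1, \dots, t_n$ (compatibility with the zero element and with negation being consequences of additivity). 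Now, exactly as in Lemma~\ref{pro-group}, each of these finitely many equalities of morphisms of inverse systems into $H$ holds in $\Pro(\Set)$ precisely when, for every $i \in \mathcal I_H$, there is a morphism $\varphi \colon f^*(i) \to j$ in $\mathcal I_G$ making the corresponding identity hold on the nose for $f_i \circ G_\varphi \colon G_j \to H_i$. Since $\mathcal I_G$ is filtered and there are only finitely many identities, we may arrange a single $\varphi$ (with $j$ chosen large enough) that works for all of them simultaneously; then $f_i \circ G_\varphi$ is a genuine homomorphism of Lie $K$-algebras, so $f$ is equivalent to a morphism of inverse systems of Lie $K$-algebras.

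For the characterisation, one implication is immediate: being abelian is invariant under isomorphism, and a pro-$K$-module regarded through the vertical functor has zero Lie bracket, hence is an abelian Lie $K$-algebra object in $\Pro(\Set)$. Conversely, suppose the bracket $G \times G \to G$ of a Lie pro-$K$-algebra $G$ is the zero morphism in $\Pro(\Set)$. Let $G'_i = G_i/[G_i, G_i]$ be the levelwise abelianisations, which are $K$-modules since each $[G_i,G_i]$ is a $K$-submodule, and let $f \colon G \to G'$ be the associated level morphism. Vanishing of the bracket in $\Pro(\Set)$ gives, for every $i$, a morphism $\varphi \colon i \to j$ with $G_\varphi([x,y]) = 0$ for all $x, y \in G_j$; thus $G_\varphi$ annihilates $[G_j, G_j]$ and factors as $G_\varphi = u_i \circ f_j$ for a $K$-linear, hence Lie, map $u_i \colon G'_j \to G_i$. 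The family $(u_i)$ is inverse to $f$: by construction $u_i \circ f_j = G_\varphi$, while $f_i \circ u_i = G'_\varphi$ because $f_i \circ u_i \circ f_j = f_i \circ G_\varphi = G'_\varphi \circ f_j$ and $f_j$ is surjective, so the isomorphism criterion for level morphisms applies. Hence $G$ is isomorphic to a pro-$K$-module, both in $\Pro(\mathbf{Lie}_K)$ and in $\Pro(\mathbf{Mod}_K)$.

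The step I expect to be the crux — and the only one where the assumption on $K$ enters — is the reduction of $K$-linearity to commutation with the finitely many ring generators $t_1, \dots, t_n$. If $K$ had infinitely many ``independent'' scalars, there would be no reason that a single transition morphism $\varphi$ could make $f_i \circ G_\varphi$ respect all of them at once, and one should expect fullness to fail; requiring $K$ to be a localization of a finitely generated ring is exactly what puts the Lie-algebra situation on the same finitary footing as the group case handled in Lemma~\ref{pro-group}. The remaining bookkeeping (assembling the $\varphi$ chosen for each $i$ into an honest morphism of inverse systems equivalent to $f$, and checking the isomorphism criterion) is routine and goes as in Lemma~\ref{pro-group}.
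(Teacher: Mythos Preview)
Your proof is correct and follows essentially the same approach as the paper: reduce $K$-linearity to commutation with finitely many ring generators of a finitely generated subring whose localization is $K$, so that being a Lie $K$-algebra homomorphism is witnessed by finitely many identities, and then argue exactly as in Lemma~\ref{pro-group}; for the second claim, use the levelwise abelianisation and the vanishing of the bracket to produce the inverse family $(u_i)$. Your write-up is somewhat more explicit (spelling out the localization argument and the verification that $(u_i)$ is inverse to $f$), but there is no substantive difference in method.
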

\begin{proof}
The proof of the first claim is the same as in lemma \ref{pro-group} since a pro-set morphism between Lie pro-\(K\)-algebras or pro-\(K\)-modules is a homomorphism if and only if it preserves finitely many operations, namely, the abelian group operations, the Lie bracket in the Lie algeba case, and the multiplications by the finitely generators of a ring \(\widetilde K\) such that \(K\) is a localization of \(\widetilde K\).

Suppose that \(L\) is a Lie pro-\(K\)-algebra such that it is an abelian Lie \(K\)-algebra object in \(\Pro(\Set)\). Let \(L'_i = L_i / [L_i, L_i]\) be its levelwise abelianization and \(f \colon L \to L'\) be the corresponding level morphism. Since \(L\) is an abelian Lie \(K\)-algebra object, for any \(i \in \mathcal I_L\) there are \(j \in \mathcal I_L\) and \(\varphi \colon i \to j\) such that \(L_\varphi([x, y]) = 0\), i.e. \(L_\varphi = u_i \circ f_j\) for some \(u_i \colon L'_j \to L_i\). Then \(f\) is an isomorphism of pro-groups with the inverse given by the family of \(u_i\).
\end{proof}

The condition on \(K\) in lemma \ref{pro-lie} is necessary.

\begin{example} \label{k-cond}
Let \(K = \mathbb Z[x_1, x_2, \ldots]\) be a polynomial ring over infinitely many variables. Consider the \(K\)-module \(N = \mathbb Z \oplus \mathbb Z \oplus \ldots\) with the identity actions of \(x_i\) and the \(K\)-modules \(M_k\) with the trivial actions of \(x_i\) for \(k \geq 0\), where \(M_k\) are the subgroups of \(N\) with trivial first \(k\) components. Then \(M = \varprojlim^{\Pro(\mathbf{Ab})}_k M_k\) is a pro-module over \(K\), where the structure maps are the inclusions. Also, \(f \colon M \to N\) is a morphism of abelian pro-groups given by the identity homomorphism \(M_0 \to N\). Clearly, \(f\) is not equivalent to a morphism of pro-\(K\)-modules since no \(M_k \to N\) is a homomorphism of \(K\)-modules. On the other hand, \(f\) is a homomorphism of \(K\)-modules in \(\Pro(\Set)\) since for any \(n\) the map \(M_k \to N\) preserves the action of \(x_n\) for \(k \geq n\).
\end{example}

\begin{lemma} \label{pro-ring}
In the diagrams
\[\xymatrix@R=24pt@C=84pt@!0{
\Pro(\mathbf{Rng}) \ar[r] & \mathrm{Rng}(\Pro(\Set)) &
\Pro(\mathbf{Ring}) \ar[r] & \mathrm{Ring}(\Pro(\Set)) \\
\Pro(\mathbf{CRng}) \ar[u] \ar[r] & \mathrm{CRng}(\Pro(\Set)) \ar[u] &
\Pro(\mathbf{CRing}) \ar[r] \ar[u] & \mathrm{CRing}(\Pro(\Set)) \ar[u] \\
}\]
all functors are fully faithful. A pro-ring \(R\) is isomorphic to a commutative pro-ring, unital pro-ring, or unital commutative pro-ring if and only if it is a commutative ring object, unital ring object, or unital commutative ring object in \(\Pro(\Set)\) respectively.
\end{lemma}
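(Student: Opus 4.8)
The plan is to follow the pattern of Lemmas~\ref{pro-group} and~\ref{pro-lie}, treating multiplication — and, in the unital cases, the unit — exactly as the group and module operations were treated there, and then to handle the three essential-image assertions one at a time. For the functors: the four vertical ones are fully faithful, being \(\Pro(-)\) or \(T(-)\) applied to inclusions of full subcategories cut out by the single identity \(xy=yx\), and both \(\Pro(-)\) and \(T(-)\) preserve full faithfulness. The horizontal functors are faithful because the equivalence relation on morphisms of inverse systems does not refer to the algebraic structure. For fullness, argue as in Lemma~\ref{pro-group}: if \(R,S\) are pro-rings and \(f\colon R\to S\) is a pro-set morphism which is a homomorphism of (unital) ring objects in \(\Pro(\Set)\), then for each \(i\in\mathcal I_S\) filteredness of \(\mathcal I_R\) provides a single index \(j\) and a morphism \(\varphi\colon f^*(i)\to j\) with \(f_i\circ R_\varphi\) preserving \(+\) and \(\cdot\) (only finitely many equations are involved); preservation of \(+\) forces preservation of \(0\) and of \(-\), so \(f_i\circ R_\varphi\) is a ring homomorphism, and in the unital case it is automatically unital, since \(f(1)=1\) in \(\Pro(\Set)\) already means \(f_i(1_{R,f^*(i)})=1_{S,i}\) at every level (as \(\Pro(\Set)(\ast,S)=\varprojlim_i S_i\)) and the structure maps of \(R\) are unital. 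Hence \(f\) is equivalent to a morphism of inverse systems of (unital) rings.

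The ``only if'' half of the last sentence is immediate: the functor carries isomorphisms to isomorphisms of ring objects, being a commutative, unital, or unital commutative ring object is invariant under these, and the image of a commutative or unital pro-ring plainly is such a ring object. For the commutative ``if'' direction, imitate the abelianization argument of Lemmas~\ref{pro-group}--\ref{pro-lie}: given a pro-ring \(R\) that is a commutative ring object in \(\Pro(\Set)\), let \(R''_i\) be the quotient of \(R_i\) by the ideal generated by all \(ab-ba\), and \(q\colon R\to R''\) the induced level morphism, so that \(R''\in\Pro(\mathbf{CRng})\). Commutativity in \(\Pro(\Set)\) yields, for each \(i\), a structure map \(\varphi\colon i\to j\) with \(R_\varphi(ab)=R_\varphi(ba)\) for all \(a,b\in R_j\); then \(R_\varphi\) kills the commutator ideal and factors as \(R_\varphi=u_i\circ q_j\) for a ring homomorphism \(u_i\colon R''_j\to R_i\), and since \(q_j\) is an epimorphism also \(q_i\circ u_i=R''_\varphi\). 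The isomorphism criterion for level morphisms recalled above now shows that \(q\) is an isomorphism, so \(R\) is isomorphic to the commutative pro-ring \(R''\).

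The genuinely new point — and the step I expect to be the main obstacle — is the unital case, because unitality cannot be imposed by a levelwise quotient and a priori none of the \(R_i\) need contain a unit. Given a pro-ring \(R\) that is a unital ring object in \(\Pro(\Set)\), its unit is a compatible family \(1=(1_i)\in\varprojlim_i R_i\), and unwinding the two identities \(1\cdot x=x=x\cdot 1\) in \(\Pro(\Set)\) shows: for each \(i\) there is \(j(i)\ge i\) such that \(1_i\) acts as a two-sided identity on the subring \(\Image(R_{i\le j(i)})\subseteq R_i\). One then sets \(R'_i=\{y\in R_i\mid 1_iy=y1_i=y\}\), which by associativity is a subring, contains \(1_i\) and \(\Image(R_{i\le j(i)})\), is therefore a unital ring, and whose membership is preserved by the structure maps of \(R\) (they send \(1_j\) to \(1_i\)); thus \(R'\in\Pro(\mathbf{Ring})\), and the inclusion \(R'\to R\) is a level morphism of pro-rings which one verifies to be an isomorphism via the isomorphism criterion, using at stage \(i\) the index \(j(i)\) together with the corestriction of \(R_{i\le j(i)}\) to \(R'_i\). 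The delicate part is precisely this bookkeeping: identifying the correct subobject and checking the two identities of the criterion, everything resting on the fact — read off from the unital identities in \(\Pro(\Set)\) — that \(1_i\) is an honest two-sided identity on the image of a sufficiently deep structure map into \(R_i\). The unital commutative case then follows by first replacing \(R\) by an isomorphic \(R''\in\Pro(\mathbf{CRng})\) via the commutative reduction — it remains a unital ring object in \(\Pro(\Set)\) — and running the unital construction inside \(\mathbf{CRng}\), where the subrings \(R''_i\) stay commutative, to reach \(R''\cong R'''\in\Pro(\mathbf{CRing})\).
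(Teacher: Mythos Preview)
Your proof is correct, and for the fully-faithful claim and the commutative case it matches the paper's argument. For the unital case, however, you take a genuinely different route: you pass to the level \emph{sub}ring \(R'_i=\{y\in R_i\mid 1_iy=y1_i=y\}\) on which the chosen idempotent \(1_i\) is already a two-sided unit, and then invoke the isomorphism criterion using the corestriction of a deep enough structure map. The paper instead takes the level \emph{quotient} \(R''_i=R_i/J_i\), where \(J_i\) is the ideal generated by \(1_ix-x\) and \(x1_i-x\); the image of \(1_i\) is then a genuine unit in \(R''_i\), and the same ``\(1_i\) is a unit on \(\Image(R_{i\le j(i)})\)'' observation shows that \(J_{j(i)}\subseteq\Ker(R_{i\le j(i)})\), so the structure map factors through \(R''_{j(i)}\) and the quotient level morphism is an isomorphism. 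Both arguments rest on the same key fact and are of comparable difficulty; your subring version has the mild advantage that the \(R'_i\) sit inside the original \(R_i\), while the paper's quotient version makes the unital structure more immediately visible. Note that your parenthetical remark that ``unitality cannot be imposed by a levelwise quotient'' is actually mistaken --- that is exactly what the paper does --- though this does not affect the validity of your alternative construction.
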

\begin{proof}
The first claim may be proved as in lemma \ref{pro-group}. Take a pro-ring \(R\). If it is a commutative ring object in \(\Pro(\Set)\), then it is isomorphic to the commutative pro-ring \(R' = \varprojlim^{\Pro(\mathbf{CRng})}_{i \in \mathcal I_R} R_i / [R_i, R_i]\), where \([R_i, R_i]\) is the ideal of \(R_i\) generated by the additive commutators. If \(R\) is a unital ring object in \(\Pro(\Set)\), then it is isomorphic to the unital pro-ring \(R'' = \varprojlim^{\Pro(\mathbf{Ring})}_{i \in \mathcal I_R} R_i / J_i\), where \(J_i\) is the ideal of \(R_i\) generated by the elements \(ex - x\) and \(xe - x\) for \(x \in R_i\) and \(e \in R_i\) is the element given by the identity \(\{*\} \to R\) in \(\Pro(\Set)\). Finally, if \(R\) is commutative pro-ring and unital commutative ring object in \(\Pro(\Set)\), then the last construction gives a unital commutative pro-ring \(R''\).
\end{proof}

\begin{lemma} \label{pro-alg}
The functor \(\Pro(\mathbf{Alg}) \to \mathrm{Alg}(\Pro(\Set))\) is fully faithful. An object \(f \colon K \to R\) in \(\mathrm{Alg}(\Pro(\Set))\) is isomorphic to a unital pro-algebra if and only if \(K\) and \(R\) are isomorphic to pro-rings.
\end{lemma}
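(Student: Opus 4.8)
The plan is to reduce both assertions to lemma \ref{pro-ring} and the standard machinery of pro-completions. For the first assertion I would move to arrow categories. Since \(f'\) is a monomorphism, a morphism of algebras \(h\colon f\to f'\) determines a unique \(g\) with \(f'g=hf\), and conversely every commutative square \((g,h)\) of central monomorphisms of rings arises this way; hence \(\mathbf{Alg}\), resp.\ \(\mathrm{Alg}(\Pro(\Set))\), is the full subcategory of \(\mathrm{Ring}(\Set)^{\to}\), resp.\ of \(\mathrm{Ring}(\Pro(\Set))^{\to}\), on the central monomorphisms. Pro-completion sends fully faithful functors to fully faithful functors and commutes with passage to arrow categories — the latter because every pro-morphism is level-representable, \cite[chapter I, \S 1.3, theorem 3]{Shape}, and because in \(\Set\) filtered colimits and small limits commute with the finite limit defining ``commutative square''. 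Hence lemma \ref{pro-ring} yields a chain of fully faithful functors \(\Pro(\mathbf{Alg})\hookrightarrow\Pro(\mathrm{Ring}(\Set)^{\to})\simeq\Pro(\mathbf{Ring})^{\to}\to\mathrm{Ring}(\Pro(\Set))^{\to}\). It remains to see that the resulting functor \(\Pro(\mathbf{Alg})\to\mathrm{Ring}(\Pro(\Set))^{\to}\) lands in \(\mathrm{Alg}(\Pro(\Set))\): for an inverse system of algebras \((f_i\colon K_i\to R_i)_{i\in\mathcal I}\) the induced arrow \(f\) is a level morphism with monic components, hence monic, and it is central because the identity of morphisms \(K\times R\rightrightarrows R\) expressing centrality of \(f\) (tested on the two projections) holds levelwise, each \(f_i\) being central. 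Since a fully faithful functor factoring through a full subcategory is fully faithful into it, \(\Pro(\mathbf{Alg})\to\mathrm{Alg}(\Pro(\Set))\) is fully faithful.

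For the second assertion one direction is immediate: a unital pro-algebra has pro-rings as its underlying ring objects, and an isomorphism in \(\mathrm{Alg}(\Pro(\Set))\) restricts to an isomorphism of the underlying unital ring objects in \(\Pro(\Set)\). For the converse, take \(f\colon K\to R\) in \(\mathrm{Alg}(\Pro(\Set))\) with \(K,R\) isomorphic to pro-rings. Since \(K,R\) are unital ring objects, lemma \ref{pro-ring} lets us assume \(K,R\in\Pro(\mathbf{Ring})\), and by fullness there \(f\) is represented by a level morphism \(f_i\colon K_i\to R_i\) of unital rings over a cofinite directed poset \(\mathcal I\). As \(f\) is a monomorphism in the regular category \(\Pro(\Set)\), lemma \ref{image-dec} identifies it with the inclusion \(\varprojlim_i\Image(f_i)\hookrightarrow R\) (the regular epimorphism \(K\to\Image f\) being also monic, hence an isomorphism), so after replacing each \(K_i\) by the subring \(\Image(f_i)\) of \(R_i\) we may assume every \(f_i\) injective. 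Writing out the centrality of \(f\) on the two projections from \(K\times R\), for each \(i\) there is \(j\ge i\) such that \(f_i\bigl(\Image(K_j\to K_i)\bigr)\) centralizes \(\Image(R_j\to R_i)\) in \(R_i\); this property is upward closed in \(j\), so, \(\mathcal I\) being cofinite, one chooses such \(j=j(i)\) monotonically. Then \(\bar K_i:=\Image(K_{j(i)}\to K_i)\) and \(\bar R_i:=\Image(R_{j(i)}\to R_i)\), with structure maps inherited from \(K\) and \(R\) and connecting homomorphisms \(\bar f_i:=f_i|_{\bar K_i}\colon\bar K_i\hookrightarrow\bar R_i\), form an inverse system of algebras — the \(\bar f_i\) are injective and central — hence a unital pro-algebra \(\bar f\); a routine cofinality argument (through the reindexing \(i\mapsto j(i)\) and the surjections \(K_{j(i)}\twoheadrightarrow\bar K_i\)) shows the level inclusions \(\bar K\hookrightarrow K\) and \(\bar R\hookrightarrow R\) are isomorphisms of pro-rings identifying \(\bar f\) with \(f\). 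Thus \(f\) is isomorphic in \(\mathrm{Alg}(\Pro(\Set))\) to the unital pro-algebra \(\bar f\).

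The expected main obstacle is the converse of the second assertion, and within it the descent to levelwise data: the hypotheses deliver centrality of \(f\) only as a statement about all pro-sets and injectivity of \(f\) only as the property of being a monomorphism in \(\Pro(\Set)\), whereas a unital pro-algebra requires a level representative with individually injective and central components. The first is produced by the regular-category image decomposition of lemma \ref{image-dec}, the second by passing to the systems of images \(\Image(K_{j(i)}\to K_i)\) and \(\Image(R_{j(i)}\to R_i)\) under a monotone reindexing tuned so that the centralizing relation becomes visible at each level; the delicate point is checking that neither replacement changes the isomorphism class of \(f\) in \(\mathrm{Alg}(\Pro(\Set))\). Full faithfulness, by contrast, is formal once one is in arrow categories and can invoke lemma \ref{pro-ring}.
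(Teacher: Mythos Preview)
Your proof is correct. For full faithfulness, your argument via arrow categories is a categorical repackaging of the paper's hands-on argument: the paper represents a morphism in \(\mathrm{Alg}(\Pro(\Set))\) between unital pro-algebras as a square in \(\Pro(\mathbf{Ring})\) that commutes in \(\Pro(\mathbf{Ring})\), and then adjusts the vertical morphisms within their equivalence classes to make the square commute already at the level of inverse systems --- which is exactly the content of the equivalence \(\Pro(\mathbf{Ring}^{\to})\simeq\Pro(\mathbf{Ring})^{\to}\) that you invoke.

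For the essential-image assertion, however, you take a genuinely different route. The paper forces centrality by passing to \emph{quotients}: it sets \(R'_i=R_i/J_i\) with \(J_i\) the ideal generated by the additive commutators \([x,f_i(y)]\), and then takes \(K'_i\) to be the image of \(f_i(K_i)\) in \(R'_i\); the centrality hypothesis on \(f\) guarantees that \(R\to R'\) is an isomorphism of pro-rings. You instead stay inside the given rings and pass to \emph{sub-systems}: first the image decomposition of lemma~\ref{image-dec} makes each \(f_i\) injective, and then you shrink both sides to \(\bar K_i=\Image(K_{j(i)}\to K_i)\) and \(\bar R_i=\Image(R_{j(i)}\to R_i)\) for a monotone \(j(i)\) chosen so that centrality becomes visible levelwise. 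The paper's quotient construction is uniform with its proofs of lemmas~\ref{pro-group}--\ref{pro-ring} and needs no reindexing trick; your sub-object construction avoids quotients, keeps the target \(R\) fixed up to a cofinal passage, and makes the isomorphism with the original \(f\) immediate via the level-isomorphism criterion. Both are standard pro-category manoeuvres and neither is clearly shorter.
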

\begin{proof}
Clearly, the given functor is faithful. By lemma \ref{pro-ring}, a morphism in \(\mathrm{Alg}(\Pro(\Set))\) between unital pro-algebras may be represented by a diagram
\[\xymatrix@R=24pt@C=48pt@!0{
K \ar[d] \ar[r]^f & R \ar[d] \\
K' \ar[r]^{f'} & R',
}\]
of inverse systems of unital rings, where both rows are level morphisms and central at each level, and the square is commutative in \(\Pro(\mathbf{Ring})\). Changing the vertical morphisms by equivalent ones, we may assume that the square is commutative in the category of inverse systems, so it represents a morphism in \(\Pro(\mathbf{Alg})\). This proves the first claim.

Now let \(K \to R\) be an object of \(\mathrm{Alg}(\Pro(\Set))\) represented by a morphism \(f \colon K \to R\) of inverse systems of sets. Suppose that \(K\) and \(R\) are pro-rings. Then by lemma \ref{pro-ring} we may assume that \(f\) is a level morphism of inverse systems of unital pro-rings. Now let \(R'_i = R_i / J_i\), where \(J_i\) is the ideal generated by the additive commutators \([x, f_i(y)]\) for \(x \in R_i\) and \(y \in K_i\), and \(K'_i\) be the image of \(f_i(K_i)\) in \(R'_i\). Clearly, the induced morphism \(K' \to R'\) of inverse systems is an object of \(\Pro(\mathbf{Alg})\) isomorphic to \(f\).
\end{proof}

Let \(T\) be an algebraic theory from one of the lemmas \ref{pro-group}--\ref{pro-ring}. Then a morphism \(f \colon X \to Y\) in \(\Pro(\mathbf T)\) is an isomorphism if and only if it is an isomorphism in \(\Pro(\Set)\). Since \(\Pro(\mathbf T) \to \Pro(\Set)\) preserves the image decompositions of morphisms, a morphism \(f \colon X \to Y\) is a monomorphism or a regular epimorphism in \(\Pro(\mathbf T)\) if and only if it is a monomorphism or a regular epimorphism in \(\Pro(\Set)\) respectively. Also, \(\Pro(\mathbf T) \to \Pro(\Set)\) preserves finite limits since the same holds for \(\mathbf T \to \Set\).

\section{Actions of pro-groups}

Let us recall the classical definitions of the actions:
\begin{itemize}
\item A group \(G\) acts on a group \(X\) if there is a map \(a \colon G \times X \to X\) such that \(a(gh, x) = a(g, a(h, x))\), \(a(1, x) = x\), and \(a(g, xy) = a(g, x)\, a(g, y)\).
\item A Lie \(K\)-algebra \(L\) acts on a Lie \(K\)-algebra \(M\) if there is a \(K\)-bilinear map \(a \colon L \times M \to M\) such that \(a(x, [u, v]) = [a(x, u), v] + [u, a(x, v)]\) and \(a([x, y], u) = a(x, a(y, u)) - a(y, a(x, u))\).
\item Any abelian group \(A\) acts on any abelian group \(B\) in the unique (trivial) way inside \(\mathbf{Ab}\) since this category is abelian, the same holds for \(\mathbf{Lie}_K\).
\item A ring \(R\) acts on a ring \(S\) if \(S\) is a non-unital \(R\)-\(R\)-bimodule in such a way that \(p(ab) = (pa)b\), \((ap)b = a(pb)\), and \(a(bp) = (ab)p\) for \(a, b \in S\) and \(p \in R\). In other words, the action is given by the multiplication maps \(l \colon R \times S \to S\) and \(r \colon S \times R \to S\) satisfying some axioms.
\item A commutative ring \(K\) acts on a commutative ring \(S\) if \(S\) is a non-unital \(K\)-module such that \(k(ab) = (ka)b\) for \(a, b \in S\) and \(k \in K\).
\item A unital ring \(R\) acts on a commutative ring \(S\) if \(R\) acts on \(S\) as a ring and \(1a = a = a1\) for all \(a \in S\).
\item A unital commutative ring \(K\) acts on a commutative ring \(S\) if \(K\) acts on \(S\) as a commutative ring and \(1a = a\) for \(a \in S\).
\item A unital algebra \(K \subseteq R\) acts on a ring \(S\) if \(R\) acts on \(S\) as a unital ring and \(ka = ak\) for \(a \in S\) and \(k \in K\).
\end{itemize}
In the first four cases the semi-direct product in the sense of semi-abelian categories is the classical semi-direct product. The coproducts in some of these categories and the objects \(G \freeact X\) may be constructed in the following way:
\begin{itemize}
\item In the case of groups
\[G \amalg H = \{1\} \sqcup G^\# \sqcup H^\# \sqcup (G^\# \times H^\#) \sqcup (H^\# \times G^\#) \sqcup (G^\# \times H^\# \times G^\#) \sqcup (H^\# \times G^\# \times H^\#) \sqcup \ldots\]
as a set, where \(G^\# = G \setminus \{1\}\) and \(H^\# = H \setminus \{1\}\), and \(G \freeact H\) is the abstract group with the generators \(\up gh\) for \(g \in G\) and \(h \in H\) and the relations \(\up g{(hh')} = \up gh\, \up g{h'}\).
\item In the case of abelian groups or \(K\)-modules \(A \amalg B = A \times B\) and \(A \freeact B = B\).
\item In the case of rings
\[R \amalg S = R \oplus S \oplus (R \otimes S) \oplus (S \otimes R) \oplus (R \otimes S \otimes R) \oplus (S \otimes R \otimes S) \oplus \ldots,\]
where \(\otimes\) means the tensor product over \(\mathbb Z\), and
\[R \freeact S = S \oplus (R \otimes S) \oplus (S \otimes R) \oplus (R \otimes S \otimes R) \oplus (S \otimes R \otimes S) \oplus \ldots.\]
\item In the case of commutative rings \(R \amalg S = R \oplus S \oplus (R \otimes S)\) and \(R \freeact S = S \oplus (R \otimes S)\).
\end{itemize}

Let \(T\) be the algebraic theory of groups, Lie \(K\)-algebras, rings, or commutative rings. Using Yoneda embedding of \(\Pro(\Set)\) to the category of presheaves on \(\Pro(\Set)\), it is easy to see that actions in \(T(\Pro(\Set))\) may be described via the same operations and axioms as in \(\mathbf T\). Moreover, the canonical morphism of pro-sets \(G \times X \to G \rtimes X\) is always an isomorphism for any action of \(G\) on \(X\). It follows that \(T(\Pro(\Set))\) is homological and there is only a set of actions of \(G\) on \(X\) for any \(G, X \in T(\Pro(\Set))\).

\begin{theorem} \label{act-pro-group}
Let \(G\) and \(X\) be pro-groups and \(a \colon G \times X \to X\) be an action of \(G\) on \(X\) in \(\mathrm{Grp}(\Pro(\Set))\). Then the split extension \(X \to X \rtimes G \leftrightarrows G\) is isomorphic to a formal projective limit \(X' \to X' \rtimes G' \leftrightarrows G'\) of split extensions of groups in such a way that \(G \to G'\) is induces by a cofinal functor between the index categories. The functor \(\Pro(\mathbf{Grp}) \to \mathrm{Grp}(\Pro(\Set))\) induces a bijection between the sets of actions of \(G\) on \(X\) in these categories.
\end{theorem}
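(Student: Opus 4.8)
The plan is to imitate the abelianization trick used in the proof of Lemma~\ref{pro-group}: leave $G$ unchanged (up to a cofinal reindexing) and replace $X$ by a levelwise quotient that destroys the obstructions to $a$ being an honest action.

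First I would pass to level form. Since $G$ and $X$ are pro-groups, reindexing along cofinal functors from cofinite directed posets lets me assume $G=(G_i)_{i\in I}$ and $X=(X_i)_{i\in I}$ are inverse systems of groups over one cofinite directed poset $I$ and that $a\colon G\times X\to X$ is a level morphism of the underlying pro-sets, so each $a_i\colon G_i\times X_i\to X_i$ is a map of sets with $a_i(g|_i,x|_i)=a_j(g,x)|_i$ whenever $i\le j$. By the description of semidirect products recalled before the theorem, $X\rtimes G$ is then the group object of $\Pro(\Set)$ whose $i$-th level is $X_i\times G_i$ with the (a priori non-associative) operation $(x,g)(x',g')=(x\,a_i(g,x'),gg')$, the maps $i,p,s$ being levelwise.

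Next I would write down the obstructions. For $g,h\in G_i$ and $x,y\in X_i$ set
\begin{gather*}
c_i(g,x,y)=a_i(g,xy)\,a_i(g,y)^{-1}a_i(g,x)^{-1},\qquad e_i(x)=a_i(1,x)\,x^{-1},\\
d_i(g,h,x)=a_i(gh,x)\,a_i\bigl(g,a_i(h,x)\bigr)^{-1},
\end{gather*}
elements of $X_i$. Because $a$ is an honest action in $\mathrm{Grp}(\Pro(\Set))$, each defining equation of an action is an equation of pro-set morphisms and hence holds ``eventually levelwise''; cofiniteness of $I$ then lets me choose a monotone $\phi\colon I\to I$ with $\phi(i)\ge i$ such that, for every $i$, the transition $X_{\phi(i)}\to X_i$ kills all the $c_{\phi(i)}$, $d_{\phi(i)}$, $e_{\phi(i)}$ and moreover $a_i(g|_i,1)=1$ for all $g\in G_{\phi(i)}$ (the last being the pro-consequence $a(g,1)=1$ of multiplicativity). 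Now let $N_i\trianglelefteq X_i$ be the smallest normal subgroup containing all values of $c_i,d_i,e_i$ that is also stable under every $a_i(g,-)$, $g\in G_i$; it exists because $X_i$ is such a subgroup, and the transition maps carry $N_j$ into $N_i$ because they intertwine the $a_j$ up to restriction. Put $X'=X/N$. One checks: (1) $X'$ is an inverse system of groups and $a$ descends to a level morphism $a'\colon G\times X'\to X'$ that is, levelwise, an honest action --- well-definedness of $a'_i(g,-)$ on $X_i/N_i$ uses that $N_i$ is $a_i$-stable together with $c_i\in N_i$, and the action axioms for $a'_i$ are exactly the memberships $c_i,d_i,e_i\in N_i$; (2) the quotient $q\colon X\to X'$ is an isomorphism in $\Pro(\mathbf{Grp})$ --- by the level-isomorphism criterion recalled in Section~2 it is enough to factor each $X_{\phi(i)}\to X_i$ through $X_{\phi(i)}/N_{\phi(i)}$, i.e.\ to see $N_{\phi(i)}\subseteq\Ker(X_{\phi(i)}\to X_i)$, and this kernel is a normal subgroup that contains the defect values (choice of $\phi$) and is $a_{\phi(i)}$-stable (the level property of $a$ together with $a_i(g|_i,1)=1$ for $g$ from level $\phi(i)$), hence contains $N_{\phi(i)}$. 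Being a morphism of pro-groups that is an isomorphism in $\Pro(\Set)$, $q$ is an isomorphism in $\Pro(\mathbf{Grp})$ by the remark closing Section~3.

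I expect the main obstacle to be making clauses (1) and (2) compatible: enlarging $N_i$ to be stable under all $a_i(g,-)$, which (1) forces, risks making $N_i$ too big for (2), and the resolution is exactly that $\Ker(X_{\phi(i)}\to X_i)$ is itself $a_{\phi(i)}$-stable once $\phi(i)$ is large enough for the unit relation, so the $a$-stable enlargement never leaves that kernel. Granting (1) and (2), $q$ is $G$-equivariant by construction, so it gives an isomorphism of split extensions from $X\to X\rtimes G\leftrightarrows G$ onto $X'\to X'\rtimes G\leftrightarrows G$, the target being an honest formal projective limit of split extensions of groups and $G\to G'$ being the cofinal reindexing of the first step; this proves the first assertion and with it the surjectivity half of the claimed bijection. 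For injectivity, if two split extensions of $G$ by $X$ in $\Pro(\mathbf{Grp})$ become isomorphic in $\mathrm{Grp}(\Pro(\Set))$, the comparison isomorphism is a morphism between pro-groups, hence by the full faithfulness of Lemma~\ref{pro-group} it comes from a morphism of $\Pro(\mathbf{Grp})$, which is an isomorphism there (again by the Section~3 remark) and still commutes with the kernel inclusions, projections and sections; so the two extensions were already isomorphic in $\Pro(\mathbf{Grp})$.
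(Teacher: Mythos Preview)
Your overall strategy—quotient each $X_i$ by the normal $a_i$-stable subgroup generated by the action defects $c_i,d_i,e_i$—is a reasonable alternative to the paper's construction via the free-action objects $G_{\gamma(i)}\freeact X_{\chi(i)}$, and the core of your argument (well-definedness of $a'_i$, the action axioms on $X'_i$, and the level-isomorphism criterion applied to $q$) is correct. But there is a genuine gap at the very first step.

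You assume that after reindexing one can take $a$ to be a \emph{strict} level morphism $a_i\colon G_i\times X_i\to X_i$ over a single poset $I$. This is not a consequence of the Marde\v{s}i\'c--Segal level-morphism theorem: that theorem, applied to $a\colon G\times X\to X$, produces cofinal functors $\alpha\colon J\to\mathcal I_{G\times X}$ and $\beta\colon J\to\mathcal I_X$ and a level map $G_{\alpha_1(j)}\times X_{\alpha_2(j)}\to X_{\beta(j)}$, but there is no reason for $\alpha_2(j)$ and $\beta(j)$ to coincide. In fact one can give explicit examples of pro-set endomorphisms $X\to X$ that admit no level representative $X_i\to X_i$ (take $X_n=\mathbb Z$ with transitions multiplication by $2$ and $a_n=\id\colon X_{n+1}\to X_n$; any level candidate would have to halve odd integers). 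The paper accordingly never assumes this: it keeps $a_i\colon G_{\gamma(i)}\times X_{\chi(i)}\to X_i$ with $\chi(i)\ge i$ throughout.

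This gap is not cosmetic, because your construction leans on the strict level form in two places: the defect $d_i(g,h,x)=a_i(gh,x)\,a_i(g,a_i(h,x))^{-1}$ needs $a_i$ to have the same $X$-index on source and target, and the very notion ``$N_i$ is $a_i(g,-)$-stable'' presupposes that $a_i(g,-)$ is a self-map of $X_i$. Once $\chi(i)>i$, $a_i(g,-)$ maps $X_{\chi(i)}$ to $X_i$ and neither of these makes sense as written. One can try to repair this by defining the $N_i$ recursively (requiring $a_i(g,N_{\chi(i)})\subseteq N_i$) and reformulating $d_i$ with the shifted indices, but the bookkeeping becomes delicate; the paper's use of $\widetilde X_i=G_{\gamma(i)}\freeact X_{\chi(i)}$ is precisely the device that absorbs this index shift cleanly, since the free object already carries a genuine $G_{\gamma(i)}$-action regardless of how $a$ is indexed.
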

\begin{proof}
For simplicity, we denote by \(\gamma(i) \in \mathcal I_G\) and \(\chi(i) \in \mathcal I_X\) the components of \(a^*(i)\) for the action \(a\) and the index \(i \in \mathcal I_X\). Without loss of generality, we may assume that the index categories of \(G\) and \(X\) are cofinite posets. Replacing \(\mathcal I_X\) by \(\mathcal I_G \times \mathcal I_X\) and increasing \(\gamma\) and \(\chi\), we may further assume that
\begin{itemize}
\item \(\gamma\) is monotone and cofinal;
\item \(\chi\) is monotone and \(\chi(i) \geq i\) for all \(i\);
\item \(a_j(g, x)|_i = a_i(g|_{\gamma(i)}, x|_{\chi(i)})\) for \(i \leq j\);
\item \(a_i(g, xy) = a_i(g, x)\, a_i(g, y)\);
\item \(a_i(1, x) = x|_i\);
\item \(a_i(g, a_{\chi(i)}(h, x)) = a_i(g h|_{\gamma(i)}, x|_{\chi(i)})\).
\end{itemize}

Let \(\widetilde X_i = G_{\gamma(i)} \freeact X_{\chi(i)}\) for all \(i \in \mathcal I_X\) and \(\widetilde X\) be the corresponding inverse system. By the above properties, there is a level morphism \(f \colon \widetilde X \to X\) of pro-groups such that \(f_i(\up g x) = a_i(g, x)\) for \(g \in G_{\gamma(i)}\) and \(x \in X_{\chi(i)}\). Now let \(X'_i\) be the factor-group of \(\widetilde X_i\) by the normal \(G_{\gamma(i)}\)-invariant subgroup generated by the elements \(\up{g|_{\gamma(i)}}{(x|_{\chi(i)})}\, a_{\chi(i)}(h, x)^{-1}\), this subgroup lies in the kernel of \(f_i\).

It follows that \(f\) induces a level morphism \(X' \to X\) of pro-groups, it is an isomorphism with the inverse given by the canonical maps \(X_{\chi(i)} \to X'_i\). Let \(G'_i = G_{\gamma(i)}\), it acts on \(X'_i\) by the construction. It is easy to check that the resulting action of \(G'\) on \(X'\) is isomorphic to \(a\) via the isomorphisms \(G \to G'\) and \(X \to X'\).

The second claim follows from lemma \ref{pro-group} and the first claim.
\end{proof}

Unfortunately, the functor \(\Pro(\mathbf{Lie}_K) \to \mathrm{Lie}_K(\Pro(\Set))\) does not induce a bijection between the sets of actions unless \(K = 0\).

\begin{example}
Let \(K\) be any non-zero unital commutative ring. Consider the inverse system \(M\) of \(K\)-modules \(M_n = K^n\) with the zero Lie brackets and the structure homomorphisms
\[M_{n + 1} \to M_n,\, (x_1, \ldots, x_{n + 1}) \mapsto (x_1, \ldots, x_n).\]
The \(K\)-module \(K\) (considered as the abelian Lie \(K\)-algebra) acts on \(M\) in \(\mathrm{Lie}_K(\Pro(\Set))\) by the morphism \(a \colon K \times M \to M\) with
\[a^*(n) = (*, n + 1),\, a_n(k; x_1, \ldots, x_{n + 1}) = (kx_2, \ldots, kx_{n + 1}).\]
So the semi-direct product \(M \rtimes K\) in \(\mathrm{Lie}_K(\Pro(\Set))\) is the inverse limit of \(M_n \times K = K^{n + 1}\) of \(K\)-modules with the Lie bracket
\[K^{n + 1} \times K^{n + 1} \to K^n,\, (x_1, \ldots, x_{n + 1}; k; y_1, \ldots, y_{n + 1}; l) \mapsto (ky_2 - l x_2, \ldots, ky_{n + 1} - l x_{n + 1}; 0).\]
Now suppose that \(f \colon M \rtimes K \to L\) is a homomorphism of Lie \(K\)-algebra objects in \(\Pro(\Set)\), where \(L\) is a Lie pro-\(K\)-algebra. We claim that the induced homomorphism \(M \to L\) is trivial. Without loss of generality, \(L\) is an abstract Lie \(K\)-algebra and \(f\) is given by a \(K\)-linear homomorphism \(f_* \colon M_n \times K \to L\) such that
\[f_*(ky_2 - l x_2, \ldots, ky_{n + 1} - l x_{n + 1}; 0) = [f_*(x_1, \ldots, x_n; k), f_*(y_1, \ldots, y_n; l)].\]
By the \(K\)-linearity it follows that
\[f_*(0, \ldots, y_i, \ldots, 0; 0) = [f_*(0, \ldots, 0; 1), f_*(0, \ldots, y_i, \ldots, 0; 0)]\]
for \(2 \leq i \leq n\) (here \(y_i\) is on the \((i - 1)\)-th position in the left hand side and on the \(i\)-th position in the right hand side) and
\[f_*(0, \ldots, y_{n + 1}; 0) = 0.\]
By induction we get \(f_*(0, \ldots, y_i, \ldots, 0; 0) = 0\) for all \(2 \leq i \leq n + 1\), where \(y_i\) is on the \((i - 1)\)-th position, as claimed. On the other hand, \(M\) is not isomorphic to the zero Lie \(K\)-algebra object in \(\Pro(\Set)\), so \(M \rtimes K\) is not isomorphic to a Lie pro-\(K\)-algebra. 
\end{example}

\section{Actions of pro-rings}

We need to generalize actions of unital rings, commutative rings, and unital algebras to the pro-objects.
\begin{itemize}
\item A \textit{split extension of a unital pro-ring} \(R\) \textit{by a pro-ring} \(S\) is a split extension \(S \to S \rtimes R \leftrightarrows R\) in \(\Pro(\mathbf{Rng})\) such that \(S \rtimes R\) is a unital pro-ring and both right morphisms lie in \(\Pro(\mathbf{Ring})\). A \textit{morphism} of such extensions is a morphism of extensions of pro-rings lying in \(\Pro(\mathbf{Ring})\).
\item A \textit{split extension of a unital commutative pro-ring} \(K\) \textit{by a commutative pro-ring} \(S\) is a split extension \(S \to S \rtimes K \leftrightarrows K\) in \(\Pro(\mathbf{CRng})\) such that \(S \rtimes K\) is a unital commutative pro-ring and both right morphisms lie in \(\Pro(\mathbf{CRing})\). A \textit{morphism} of such extensions is a morphism of extensions of pro-rings lying in \(\Pro(\mathbf{CRing})\).
\item A \textit{split extension of a unital pro-algebra} \(K \rightarrowtail R\) \textit{by a pro-ring} \(S\) is a diagram
\[\xymatrix@R=24pt@C=48pt@!0{
S \ar@{>->}[r]^{i} & S \rtimes R \ar@{->>}@<-3pt>[r]_(0.6){p} & R \ar@{>->}@<-3pt>[l]_(0.4){s} \\
& K' \ar@{>->}[u] \ar[r]^{\sim} & K \ar@{>->}[u],
}\]
where the columns are unital pro-algebras, \(s\) and \(p\) are morphisms of unital pro-algebras inducing an isomorphism \(K \cong K'\), and the top row is a split extension in \(\Pro(\mathbf{Rng})\). A \textit{morphism} of such extensions is a morphism of extensions of pro-rings lying in \(\Pro(\mathbf{Ring})\).
\end{itemize}
If instead of \(\Pro(\mathbf T)\) we use \(T(\Pro(\Set))\) or just \(\mathbf T\), then the resulting isomorphism classes of split extensions are precisely the actions from the previous section defined via operations and axioms.

\begin{theorem} \label{act-pro-alg}
Let \(K \rightarrowtail R\) be a unital pro-algebra, \(S\) be a pro-ring, and \(l \colon R \times S \to S\), \(r \colon S \times R \to S\) be an action of the unital algebra object \(K \rightarrowtail R\) on the ring object \(S\) in \(\Pro(\Set)\). Then the split extension \(S \to S \rtimes R \leftrightarrows R\) is isomorphic to a formal projective limit \(S' \to S' \rtimes R' \leftrightarrows R'\) of split extensions of unital algebras by rings in such a way that \(R \to R'\) is induces by a cofinal functor between the index categories. The functors \(\Pro(\mathbf{Rng}) \to \mathrm{Rng}(\Pro(\Set))\) and \(\Pro(\mathbf{Alg}) \to \mathrm{Alg}(\Pro(\Set))\) induce a bijection between the sets of actions of \(K \rightarrowtail R\) on \(S\) in these categories.
\end{theorem}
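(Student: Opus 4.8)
The plan closely follows the proof of Theorem~\ref{act-pro-group}, replacing the group-theoretic free action object $G\freeact X$ by the ring-theoretic one $R\freeact S$ and carrying the unit of $R$ and the central subring $K$ along at each level. First I would reduce to a convenient presentation. Without loss of generality the index categories of $K$, $R$ and $S$ are cofinite posets, and by Lemma~\ref{pro-alg} (together with Lemma~\ref{pro-ring}) the unital pro-algebra $K\rightarrowtail R$ is represented by a level morphism of inverse systems of unital algebras $K_i\rightarrowtail R_i$. Write $\rho(i)\in\mathcal I_R$ and $\sigma(i)\in\mathcal I_S$ for indices large enough to serve as the components of both $l^*(i)$ and $r^*(i)$, where $i\in\mathcal I_S$. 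Replacing $\mathcal I_S$ by $\mathcal I_R\times\mathcal I_S$ and enlarging $\rho$ and $\sigma$, I would arrange, exactly as in Theorem~\ref{act-pro-group}, that $\rho$ is monotone and cofinal, $\sigma$ is monotone with $\sigma(i)\geq i$, the maps $l_i,r_i$ are compatible with the structure maps, and the whole list of bimodule, associativity, unitality and centrality identities defining an action of a unital algebra on a ring holds strictly at every level: $l_i$ and $r_i$ are biadditive; $l_i\bigl(p,l_{\sigma(i)}(q,a)\bigr)=l_i(pq,a)$ and the analogous right-module and bimodule identities; $l_i(p,a)\,b|_i=l_i(p,ab)$, $a|_i\,l_i(p,b)=r_i(a,p)\,b|_i$, $a|_i\,r_i(b,p)=r_i(ab,p)$; $l_i(1_{R_{\rho(i)}},a)=a|_i=r_i(a,1_{R_{\rho(i)}})$; and $l_i\bigl(\iota(k),a\bigr)=r_i\bigl(a,\iota(k)\bigr)$ for $k$ in the image of $K_{\rho(i)}$. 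Each of these holds in $\Pro(\Set)$, hence can be forced levelwise by passing to larger indices; this re-indexing is the part that needs care, but it is bookkeeping identical in spirit to the group case.

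Next I would build the pro-object $S'$. Put $\widetilde S_i=R_{\rho(i)}\freeact S_{\sigma(i)}=S_{\sigma(i)}\oplus(R_{\rho(i)}\otimes S_{\sigma(i)})\oplus(S_{\sigma(i)}\otimes R_{\rho(i)})\oplus\cdots$, the ring described in Section~4, with its canonical $R_{\rho(i)}$-action; by the levelwise axioms there is a level morphism $f\colon\widetilde S\to S$ of inverse systems of rings with $f_i$ restricting elements of $S_{\sigma(i)}$ and sending $p\otimes a\mapsto l_i(p,a)$, $a\otimes p\mapsto r_i(a,p)$, and iterated tensors to iterated applications of $l_i$ and $r_i$. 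Let $S'_i=\widetilde S_i/J_i$, where $J_i$ is the two-sided $R_{\rho(i)}$-stable ideal generated by the elements $(p|_{\rho(i)}\otimes a|_{\sigma(i)})-l_{\sigma(i)}(p,a)$ and $(a|_{\sigma(i)}\otimes p|_{\rho(i)})-r_{\sigma(i)}(a,p)$. The structure-compatibility axioms give $J_i\subseteq\Ker f_i$, so $f$ factors through a level morphism $S'\to S$ of pro-rings. As in Theorem~\ref{act-pro-group}, this is an isomorphism: its inverse is the morphism of inverse systems given by the canonical maps $S_{\sigma(i)}\to S'_i$, and one checks the criterion for level isomorphisms recalled in Section~2.

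Now put $R'_i=R_{\rho(i)}$ and $K'_i\rightarrowtail R'_i$ equal to $K_{\rho(i)}\rightarrowtail R_{\rho(i)}$; thus $R'=\rho^*R$, $K'=\rho^*K$, and $R\to R'$, $K\to K'$ are induced by the cofinal functor $\rho$. Since $J_i$ is $R_{\rho(i)}$-stable, $R'_i$ acts on $S'_i$, and the unitality and centrality identities arranged in the first step show that $1_{R'_i}$ acts as the identity and $\iota(K'_i)$ acts centrally on $S'_i$. Hence $S'_i\rtimes R'_i=S'_i\oplus R'_i$ is a unital ring, and $K'_i\rightarrowtail R'_i\rightarrowtail S'_i\rtimes R'_i$ exhibits it as a unital $K'_i$-algebra, so $S'_i\to S'_i\rtimes R'_i\leftrightarrows R'_i$ is a split extension of a unital algebra by a ring. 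These assemble into a level diagram, whose formal projective limit is a split extension of the unital pro-algebra $K'\rightarrowtail R'$ by $S'$; via the isomorphisms $R\cong R'$ and $S\cong S'$ it is isomorphic, in $\Pro(\mathbf{Rng})$ and compatibly with the algebra structures, to $S\to S\rtimes R\leftrightarrows R$ (recall from Section~4 that $S\rtimes R\cong S\times R$ as pro-sets). This is the first assertion.

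For the second assertion, surjectivity of the map on sets of actions is precisely the first assertion: any action of $K\rightarrowtail R$ on $S$ in $\mathrm{Alg}(\Pro(\Set))$ (a fortiori in $\mathrm{Rng}(\Pro(\Set))$) is the image of the split extension just constructed. For injectivity, suppose two split extensions in $\Pro(\mathbf{Alg})$ have isomorphic images in $\mathrm{Alg}(\Pro(\Set))$; an isomorphism of the images is in particular a morphism of split extensions of pro-rings, hence by Lemmas~\ref{pro-ring} and~\ref{pro-alg} already a morphism in $\Pro(\mathbf{Alg})$, and it is an isomorphism there because its underlying morphism of pro-rings is an isomorphism in $\Pro(\Set)$, hence in $\Pro(\mathbf{Rng})$, by the remarks at the end of Section~3. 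The main obstacle in the whole argument is not conceptual — every step mirrors Theorem~\ref{act-pro-group} — but lies in the first step: organizing the re-indexing so that the full list of bimodule, unitality and centrality identities holds strictly and simultaneously at every finite level, and then verifying that these identities survive passage to the quotients $S'_i$ and make the levelwise semidirect products genuine unital algebras.
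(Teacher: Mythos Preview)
Your overall strategy is the paper's, but two concrete details in your construction do not go through as written.

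First, with $\widetilde S_i=R_{\rho(i)}\freeact S_{\sigma(i)}$ you cannot define a ring homomorphism $f_i\colon\widetilde S_i\to S_i$ by ``iterated applications of $l_i$ and $r_i$''. Already on the summand $R_{\rho(i)}\otimes S_{\sigma(i)}\otimes R_{\rho(i)}$ you would need something like $l_i\bigl(p,\,r_{?}(a,q)\bigr)$, but $r_i(a,q)$ lands in $S_i$, not in $S_{\sigma(i)}$, so there is no index at which a second application of $l$ produces an element of $S_i$. This is exactly why the paper takes $\widetilde S_i=R_{\rho\sigma^2(i)}\freeact S_{\sigma^3(i)}$: every alternating word factors as a product of pieces each containing a single $S$-letter, hence of length at most three, and with the shifted indices the explicit formulas for $f_i$ on $a$, $p\otimes a$, $a\otimes p$, $p\otimes a\otimes q$ are consistent under these factorizations and yield a genuine ring homomorphism into $S_i$. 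The paper flags this point explicitly (``we take such the indices \ldots\ in order for $f$ to be multiplicative'').

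Second, your ideal $J_i$ is too small to force unitality and $K$-centrality on $S'_i$. The levelwise $R'_i$-action on $S'_i$ is $p\cdot[x]=[p\otimes x]$, so $1\cdot[a]=[1\otimes a]$, and your relations only give $1\otimes a-a\in J_i$ when $a$ lies in the image of the restriction $S_{\sigma^2(i)}\to S_{\sigma(i)}$, not for an arbitrary $a\in S_{\sigma(i)}$. Likewise $k\otimes a-a\otimes k$ need not lie in $J_i$. Thus $S'_i\rtimes R'_i$ is not, in general, a unital ring with $K'_i$ central, and your levelwise split extensions are not split extensions of unital algebras. The paper repairs this by adding the generators $1\otimes a-a$ and $k\otimes a-a\otimes k$ (for all $a$, $k$) to the ideal; these lie in $\Ker f_i$ by the very identities you arranged, so the isomorphism $S'\cong S$ survives.

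With these two corrections (the $\sigma^3$/$\rho\sigma^2$ shift and the enlarged ideal) your argument becomes the paper's proof verbatim; the second assertion then follows from Lemmas~\ref{pro-ring} and~\ref{pro-alg} exactly as you indicate.
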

\begin{proof}
Without loss of generality, we may assume that the index categories of \(R\) and \(S\) are cofinite posets and \(l^*(i)\), \(r^*(i)\) have the same components \(\rho(i) \in \mathcal I_R\), \(\sigma(i) \in \mathcal I_S\) up to the order for all \(i \in \mathcal I_S\). Replacing \(\mathcal I_S\) by \(\mathcal I_R \times \mathcal I_S\) and increasing \(\rho\) and \(\sigma\), we may further assume that
\begin{itemize}
\item \(\rho\) is monotone and cofinal;
\item \(\sigma\) is monotone and \(\sigma(i) \geq i\) for all \(i\);
\item \(l_j(p, a)|_i = l_i(p|_{\rho(i)}, a|_{\sigma(i)})\) and \(r_j(a, p)|_i = r_i(a|_{\sigma(i)}, p|_{\rho(i)})\) for \(i \leq j\);
\item \(l_i\) and \(r_i\) are biadditive;
\item \(l_i(p, l_{\sigma(i)}(q, a)) = l_i(p q|_{\rho(i)}, a|_{\sigma(i)})\);
\item \(l_i(p|_{\rho(i)}, r_{\sigma(i)}(a, q)) = r_i(l_{\sigma(i)}(p, a), q|_{\rho(i)})\);
\item \(r_i(r_{\sigma(i)}(a, p), q) = r_i(a|_{\sigma(i)}, p|_{\rho(i)} q)\);
\item \(l_i(p, ab) = l_i(p, a)\, b|_i\);
\item \(r_i(a, p)\, b|_i = a|_i\, l_i(p, b)\);
\item \(r_i(ab, p) = a|_i\, r_i(b, p)\);
\item \(l_i(1, a) = a|_i = r_i(a, 1)\);
\item \(l_i(k, a) = r_i(a, k)\) for \(k \in K_{\rho(i)}\).
\end{itemize}
For simplicity we write \(\sigma^n(i)\) for \(n\) applications of \(\sigma\) to \(i\) and \(\rho \sigma^n(i)\) instead of \(\rho(\sigma^n(i))\). Let \(\widetilde S_i = R_{\rho \sigma^2(i)} \freeact S_{\sigma^3(i)}\) for all \(i \in \mathcal I_S\) and \(\widetilde S\) be the corresponding inverse system. By the above properties, there is a level morphism \(f \colon \widetilde S \to S\) of pro-rings such that
\begin{itemize}
\item \(f_i(a) = a|_i\);
\item \(f_i(p \otimes a) = l_i(p|_{\rho(i)}, a|_{\sigma(i)})\);
\item \(f_i(a \otimes p) = r_i(a|_{\sigma(i)}, p|_{\rho(i)})\);
\item \(f_i(p \otimes a \otimes q) = l_i(p|_{\rho(i)}, r_{\sigma(i)}(a|_{\sigma^2(i)}, q|_{\rho \sigma(i)}))\)
\end{itemize}
for \(a \in S_{\sigma^3(i)}\) and \(p, q \in R_{\rho \sigma^2(i)}\). Note that we take such the indices in the definition of \(\widetilde S_i\) in order for \(f\) to be multiplicative. Now let \(S'_i\) be the factor-ring of \(\widetilde S_i\) by the \(R_{\rho \sigma^2(i)}\)-invariant ideal generated by the elements
\begin{itemize}
\item \(p|_{\rho \sigma^2(i)} \otimes a|_{\sigma^3(i)} - l_{\sigma^3(i)}(p, a)\);
\item \(a|_{\sigma^3(i)} \otimes p|_{\rho \sigma^2(i)} - r_{\sigma^3(i)}(a, p)\);
\item \(k \otimes a - a \otimes k\) for \(a \in S_{\sigma^3(i)}\) and \(k \in K_{\rho \sigma^2(i)}\);
\item \(1 \otimes a - a\) for \(a \in S_{\sigma^3(i)}\).
\end{itemize}
It is easy to check that such an ideal lies in the kernel of \(f_i\). It follows that \(f\) induces a level morphism \(S' \to S\) of pro-rings, it is an isomorphism with the inverse given by the canonical maps \(S_{\sigma^3(i)} \to S'_i\). Let \(R'_i = R_{\rho \sigma^2(i)}\), it acts on \(S'_i\) by the construction as a unital algebra (with the central subring \(K'_i = K_{\rho \sigma^2(i)}\)). It is easy to check that the resulting action of \(R'\) on \(S'\) is isomorphic to the action of \(R\) on \(S\) via the isomorphisms \(R \to R'\) and \(S \to S'\).

The second claim follows from lemmas \ref{pro-ring}, \ref{pro-alg} and the first claim.
\end{proof}

As a corollary, we obtain the following result. Let \(R\) be a pro-ring, \(K\) be an abstract unital commutative ring, and suppose that the unital commutative ring object \(K\) acts on the ring object \(R\) in \(\Pro(\Set)\). Then \(R\) is isomorphic to a pro-\(K\)-algebra, i.e. the formal projective limit of \(K\)-algebras. If instead we require only that every element of \(K\) acts on the pro-ring \(R\) and these actions satisfy the classical identities, then \(R\) may be not isomorphic to a pro-\(K\)-algebra, see example \ref{k-cond}.

\begin{theorem} \label{act-pro-ring}
Let \(R\) be a unital pro-ring, \(S\) be a pro-ring, and \(l \colon R \times S \to S\), \(r \colon S \times R \to S\) be an action of the unital ring object \(R\) on the ring object \(S\) in \(\Pro(\Set)\). Then the split extension \(S \to S \rtimes R \leftrightarrows R\) is isomorphic to a formal projective limit \(S' \to S' \rtimes R' \leftrightarrows R'\) of split extensions of unital rings by rings in such a way that \(R \to R'\) is induces by a cofinal functor between the index categories. The functors \(\Pro(\mathbf{Rng}) \to \mathrm{Rng}(\Pro(\Set))\) and \(\Pro(\mathbf{Ring}) \to \mathrm{Ring}(\Pro(\Set))\) induce a bijection between the sets of actions of \(R\) on \(S\) in these categories.
\end{theorem}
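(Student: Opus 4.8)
The plan is to deduce Theorem \ref{act-pro-ring} from Theorem \ref{act-pro-alg} by realizing the unital pro-ring $R$ canonically as the top of a unital pro-algebra. For each index $i \in \mathcal{I}_R$ let $K_i \subseteq R_i$ be the prime subring of $R_i$, i.e. the image of the unique ring homomorphism $\mathbb{Z} \to R_i$; the structure maps of $R$ send $1$ to $1$, hence restrict to $K$, so the $K_i$ assemble into a pro-ring $K$ together with a level morphism $K \rightarrowtail R$. Each $K_i \to R_i$ is an injective homomorphism of unital rings with central image, so, by the levelwise criterion for monomorphisms recalled in the section on pro-sets and since centrality of $K$ in $R$ is a levelwise condition under the $\varprojlim\varinjlim$ description of pro-set morphisms, the map $K \rightarrowtail R$ is a central monomorphism in $\Pro(\Set)$; thus $K \rightarrowtail R$ is an object of $\mathrm{Alg}(\Pro(\Set))$, and since $K$ and $R$ are pro-rings it is isomorphic to a unital pro-algebra by lemma \ref{pro-alg}.

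Next I would observe that an action $(l, r)$ of the unital ring object $R$ on the ring object $S$ in $\Pro(\Set)$ is literally the same datum as an action of the unital algebra object $K \rightarrowtail R$ on $S$. The only additional axiom required by the latter is $l(k, a) = r(a, k)$ for $k \in K$, and it follows from $l(1, a) = a = r(a, 1)$ together with biadditivity of $l$ and $r$, since every element of $K_i$ is an integer multiple of $1$.

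I would then apply Theorem \ref{act-pro-alg} to the action of $K \rightarrowtail R$ on $S$: the split extension $S \to S \rtimes R \leftrightarrows R$ becomes isomorphic to a formal projective limit $S' \to S' \rtimes R' \leftrightarrows R'$ of split extensions of unital algebras $K'_i \rightarrowtail R'_i$ by rings $S'_i$, with $R \to R'$ induced by a cofinal functor between the index categories. Forgetting the central subrings $K'_i$, each level $S'_i \to S'_i \rtimes R'_i \leftrightarrows R'_i$ is a split extension of the unital ring $R'_i$ by the ring $S'_i$, so the limit is a split extension of the unital pro-ring $R'$ by the pro-ring $S'$ in the sense of the definitions preceding the theorem; the isomorphism of split extensions and the cofinality of $R \to R'$ carry over unchanged, which gives the first assertion.

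For the bijection of action sets I would argue exactly as at the end of the proof of Theorem \ref{act-pro-alg}: surjectivity is the first assertion, and injectivity follows from lemma \ref{pro-ring} (full faithfulness of $\Pro(\mathbf{Rng}) \to \mathrm{Rng}(\Pro(\Set))$ and of $\Pro(\mathbf{Ring}) \to \mathrm{Ring}(\Pro(\Set))$) together with the earlier observation that a morphism of inverse systems of (unital) rings which is an isomorphism in $\Pro(\Set)$ is already an isomorphism in $\Pro(\mathbf{Rng})$, respectively $\Pro(\mathbf{Ring})$; hence any isomorphism of split extensions in the pro-set setting is represented by one in the pro-ring setting. Once Theorem \ref{act-pro-alg} is in hand no genuinely new difficulty arises; the one point needing a little care is the verification in the first paragraph that the levelwise prime subrings really do assemble into a legitimate unital pro-algebra. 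Alternatively, one may bypass Theorem \ref{act-pro-alg} and simply repeat its proof verbatim after deleting the central subring $K$, the axiom $l_i(k, a) = r_i(a, k)$, and the ideal generators $k \otimes a - a \otimes k$ that involve $K$.
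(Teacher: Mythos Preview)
Your proposal is correct and follows essentially the same route as the paper: define \(K_i\) as the image of \(\mathbb Z\) in \(R_i\), observe that \(K \rightarrowtail R\) is a unital pro-algebra and that the given \((l, r)\) is automatically an action of this algebra object, apply Theorem~\ref{act-pro-alg}, and deduce the bijection from lemma~\ref{pro-ring}. Two minor remarks: since each \(K_i \to R_i\) is already levelwise an object of \(\mathbf{Alg}\) with compatible structure maps, \(K \rightarrowtail R\) lies directly in \(\Pro(\mathbf{Alg})\) and the detour through lemma~\ref{pro-alg} is unnecessary; and your explicit justification of \(l(k,a) = r(a,k)\) via biadditivity is a welcome detail the paper leaves implicit.
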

\begin{proof}
Let \(K_i\) be the images of \(\mathbb Z\) in \(R_i\) for \(i \in \mathcal I_R\), then \(K \rightarrowtail R\) is a unital pro-algebra and \(l\), \(r\) is an action of the unital algebra object. Then the first claim follows from theorem \ref{act-pro-alg}. The second claim follows from the first one and lemma \ref{pro-ring}.
\end{proof}

\begin{theorem} \label{act-pro-cring}
Let \(K\) be a unital commutative pro-ring, \(S\) be a commutative pro-ring, and \(l \colon K \times S \to S\) be an action of the unital commutative ring object \(R\) on the commutative ring object \(S\) in \(\Pro(\Set)\). Then the split extension \(S \to S \rtimes R \leftrightarrows R\) is isomorphic to a formal projective limit \(S' \to S' \rtimes R' \leftrightarrows R'\) of split extensions of unital commutative rings by commutative rings in such a way that \(R \to R'\) is induces by a cofinal functor between the index categories. The functors \(\Pro(\mathbf{CRng}) \to \mathrm{CRng}(\Pro(\Set))\) and \(\Pro(\mathbf{CRing}) \to \mathrm{CRing}(\Pro(\Set))\) induce a bijection between the sets of actions of \(R\) on \(S\) in these categories.
\end{theorem}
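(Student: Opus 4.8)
The plan is to deduce the statement from Theorem~\ref{act-pro-ring} and then pass to commutative quotients levelwise, the commutative case contributing nothing genuinely new over the associative one. First I would promote the given datum to an action of a ring object: setting $r \colon S \times K \to S$ to be $l$ composed with the interchange of the two factors, the defining identities of a commutative-ring action together with the commutativity of $S$ and $K$ show at once that $l$ and $r$ form an action of the unital ring object $K$ on the ring object $S$ in $\Pro(\Set)$, and that the split extension $S \to S \rtimes K \leftrightarrows K$ it defines in $\mathrm{Rng}(\Pro(\Set))$ has the same underlying objects and morphisms as the one in $\mathrm{CRng}(\Pro(\Set))$. Applying Theorem~\ref{act-pro-ring} yields a level split extension $S'' \to S'' \rtimes K'' \leftrightarrows K''$ of inverse systems of unital rings by rings, together with isomorphisms to $S \to S \rtimes K \leftrightarrows K$, in which $K \to K''$ is induced by a cofinal functor between the index categories.

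Next I would make everything commutative. Since the canonical morphism $S'' \times K'' \to S'' \rtimes K''$ is an isomorphism and the multiplication on $S \times K$ determined by the action is commutative (the cross term $l(p,b) + r(a,q)$ is symmetric in $(a,p)$, $(b,q)$ because $S$ and $K$ are commutative), the pro-ring $S'' \rtimes K''$ is a commutative ring object in $\Pro(\Set)$; hence so are its subobject $S''$ and its retract $K''$. Because $K$ was assumed to be a unital commutative pro-ring and $K \to K''$ is induced by a cofinal functor, the objects $K''_i$ are level objects of $K$ (reindexed) and are therefore honest unital commutative rings; consequently every additive commutator of $S''_i \rtimes K''_i$ lies in the ideal $S''_i$. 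Letting $J_i$ be the two-sided ideal of $S''_i \rtimes K''_i$ generated by all these commutators, so that $J_i \subseteq S''_i$ and $J_i$ is $K''_i$-invariant, I would set $S'_i = S''_i/J_i$ and $K'_i = K''_i$; the quotient maps are compatible with the transition morphisms and the semidirect-product structure, so $S' \to S' \rtimes K' \leftrightarrows K'$, with $S'_i \rtimes K'_i = (S''_i \rtimes K''_i)/J_i$, is a level split extension of inverse systems of unital commutative rings by commutative rings. Arguing as in the proof of Lemma~\ref{pro-ring}, the fact that $S'' \rtimes K''$ is a commutative ring object in $\Pro(\Set)$ forces some transition morphism to annihilate $J_j$ for each $i$, so $S'' \rtimes K'' \to S' \rtimes K'$ is an isomorphism of pro-rings restricting to $S'' \cong S'$ and $K'' = K'$. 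Composing with the isomorphisms supplied by Theorem~\ref{act-pro-ring} identifies the given split extension with the formal projective limit $S' \to S' \rtimes K' \leftrightarrows K'$, with $K \to K'$ still induced by a cofinal functor.

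The second claim then follows formally: the first claim gives surjectivity of the induced map on isomorphism classes of split extensions; Lemma~\ref{pro-ring} makes both $\Pro(\mathbf{CRng}) \to \mathrm{CRng}(\Pro(\Set))$ and $\Pro(\mathbf{CRing}) \to \mathrm{CRing}(\Pro(\Set))$ fully faithful; and, since a morphism in $\Pro(\mathbf T)$ is an isomorphism as soon as it is one in $\Pro(\Set)$, split extensions of pro-objects that are isomorphic in the target categories are already isomorphic in the pro-completions, giving injectivity.

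The main obstacle I anticipate is the bookkeeping in the second paragraph: verifying that $J_i$ is $K''_i$-invariant and that the levelwise quotients respect both the transition maps of the inverse systems and the split-extension data, so that one genuinely obtains a level split extension of inverse systems rather than merely an object of $\mathrm{CRng}(\Pro(\Set))$, and then re-running the isomorphism criterion of Lemma~\ref{pro-ring} in the presence of an action. Everything else is routine once the reduction to Theorem~\ref{act-pro-ring} removes the need to reconstruct the semidirect product by hand.
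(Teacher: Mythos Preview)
Your argument is correct, but it is not the route the paper takes. The paper observes that a unital commutative pro-ring $K$ is canonically a unital pro-algebra via the identity $K \rightarrowtail K$, and that an action of the unital commutative ring object $K$ on $S$ is precisely an action of this unital algebra object (the centrality condition $ka = ak$ holding for all of $K = R$). Theorem~\ref{act-pro-alg} then applies directly; since in its construction one has $K'_i = R'_i$, the levelwise rings $S'_i$ and $S'_i \rtimes R'_i$ come out commutative automatically, and no further quotienting is required. Your approach instead passes through Theorem~\ref{act-pro-ring} (itself deduced in the paper from Theorem~\ref{act-pro-alg} with $K = \mathrm{im}(\mathbb Z)$), obtains a level extension of possibly non-commutative rings, and then kills commutators levelwise via the isomorphism criterion in the style of Lemma~\ref{pro-ring}. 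This is a legitimate detour: you trade the one-line observation ``$K \rightarrowtail K$ is already a unital algebra'' for an explicit commutativization step. The cost is the bookkeeping you flag in your last paragraph; the benefit is that you never have to inspect the construction in Theorem~\ref{act-pro-alg} to see that it preserves commutativity when $K = R$.
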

\begin{proof}
Since \(K \rightarrowtail K\) is a unital algebra and \(l\) is an action of the unital algebra object, the first claim follows from theorem \ref{act-pro-alg}. The second claim follows from the first one and lemma \ref{pro-ring}.
\end{proof}

\begin{theorem} \label{act-pro-rng}
Let \(R\), \(S\) be pro-rings and \(l \colon R \times S \to S\), \(r \colon S \times R \to S\) be an action of the ring object \(R\) on the ring object \(S\) in \(\Pro(\Set)\). Then the split extension \(S \to S \rtimes R \leftrightarrows R\) is isomorphic to a formal projective limit \(S' \to S' \rtimes R' \leftrightarrows R'\) of split extensions of rings in such a way that \(R \to R'\) is induces by a cofinal functor between the index categories. The functor \(\Pro(\mathbf{Rng}) \to \mathrm{Rng}(\Pro(\Set))\) induces a bijection between the sets of actions of \(R\) on \(S\) in these categories.
\end{theorem}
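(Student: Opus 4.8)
The plan is to reduce the statement to Theorem~\ref{act-pro-alg} by adjoining a unit. Given pro-rings \(R\), \(S\) and an action \(l, r\) of the ring object \(R\) on the ring object \(S\) in \(\Pro(\Set)\), I would form the levelwise unitalization \(R^+\) with \(R^+_i = \mathbb Z \oplus R_i\); since \(R\) is a pro-ring this is a pro-ring, and \(\mathbb Z \rightarrowtail R^+\) (with the constant central subring \(\mathbb Z\)) is an inverse system of unital algebras, hence a unital pro-algebra. The action \(l, r\) extends to an action \(l^+, r^+\) of the unital algebra object \(\mathbb Z \rightarrowtail R^+\) on the ring object \(S\) by \(l^+((n, p), a) = na + l(p, a)\) and \(r^+(a, (n, p)) = na + r(a, p)\), where \(na\) denotes the \(n\)-fold sum in the abelian group object \(S\) (a levelwise operation, since \(S\) is an inverse system of rings, so \(l^+\) and \(r^+\) are genuine morphisms of pro-sets). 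One checks routinely that \(l^+, r^+\) satisfy all the axioms of an action of a unital algebra on a ring, including \(l^+(1_{R^+}, a) = a = r^+(a, 1_{R^+})\) and \(l^+(k, a) = r^+(a, k)\) for \(k \in \mathbb Z\).

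The key point is that this passage to the unitalization is a bijection on actions, compatibly in \(\Pro(\mathbf{Rng})\) and in \(\mathrm{Rng}(\Pro(\Set))\). An action of \(R\) on \(S\) is recovered from an action of \(\mathbb Z \rightarrowtail R^+\) on \(S\) by restricting the structure morphisms along \(R \rightarrowtail R^+\); at the level of semidirect products one has the pullback square \(S \rtimes R = (S \rtimes R^+) \times_{R^+} R\) (the unit of \(S \rtimes R^+\) being \((0, 1_{R^+})\), and \(S \rtimes R\) the preimage of \(R\) with inherited multiplication), and the two constructions are mutually inverse because biadditivity forces \(l^+((n,p), a) = na + l^+((0,p),a)\). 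The same statements hold in \(\Pro(\mathbf{Rng})\): a split extension \(S \rightarrowtail S \rtimes R \twoheadrightarrow R\) of pro-rings corresponds to a split extension of the unital pro-algebra \(\mathbb Z \rightarrowtail R^+\) by \(S\) via the levelwise unitalization of \(S \rtimes R\), and conversely by the pullback along \(R \rightarrowtail R^+\), pullbacks of pro-rings being computed levelwise. Thus there is a commutative square whose vertical maps (unitalization) are bijections, whose top horizontal map (actions of \(\mathbb Z \rightarrowtail R^+\) on \(S\) in \(\Pro(\mathbf{Alg})\) versus in \(\mathrm{Alg}(\Pro(\Set))\)) is a bijection by Theorem~\ref{act-pro-alg}, and whose bottom horizontal map is the one in the statement; hence the latter is a bijection too. (Injectivity also follows directly from the full faithfulness of \(\Pro(\mathbf{Rng}) \to \mathrm{Rng}(\Pro(\Set))\) in Lemma~\ref{pro-ring} together with protomodularity of \(\Pro(\mathbf{Rng})\).) For the first claim I would apply Theorem~\ref{act-pro-alg} to \(l^+, r^+\), obtaining a formal projective limit \(S' \to S' \rtimes R'^+ \leftrightarrows R'^+\) of split extensions of unital algebras by rings with \(R^+ \to R'^+\) induced by a cofinal functor \(u\); there \(R'^+\) is, by construction, the reindexing of \(R^+\) along \(u\), i.e.\ \(R'^+_i = \mathbb Z \oplus R'_i\) with \(R'_i = R_{u(i)}\). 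Restricting each level along \(R'_i \rightarrowtail R'^+_i\) gives split extensions of (non-unital) rings \(S'_i \to S'_i \rtimes R'_i \leftrightarrows R'_i\) whose formal projective limit is, by the pullback description, isomorphic to \(S \to S \rtimes R \leftrightarrows R\), with \(R \to R'\) induced by \(u\).

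The main obstacle I expect is bookkeeping rather than anything conceptual: one must verify carefully that the unitalization is a bijection on \emph{isomorphism classes} of split extensions on both sides — in particular that the restriction along \(R \rightarrowtail R^+\) of the split extension attached to \(l^+, r^+\) is exactly the one attached to \(l, r\) — and that all of this is stable under the pro-equivalence relation and under the functor \(\Pro(\mathbf{Rng}) \to \mathrm{Rng}(\Pro(\Set))\), which amounts to checking that the relevant pullbacks are preserved. An alternative, self-contained route would be to imitate the proof of Theorem~\ref{act-pro-alg} directly, replacing \(R_{\rho\sigma^2(i)} \freeact S_{\sigma^3(i)}\) by the non-unital \(R \freeact S = S \oplus (R \otimes S) \oplus (S \otimes R) \oplus \cdots\) and dropping the relations involving \(1 \otimes a - a\) and \(k \otimes a - a \otimes k\); there the delicate point is choosing the index shifts so that the comparison morphism \(f\) remains multiplicative on arbitrarily long alternating tensor words.
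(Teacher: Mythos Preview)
Your proposal is correct and is essentially the paper's own argument: the paper also adjoins a unit levelwise, setting \(\widetilde R_i = R_i \rtimes \mathbb Z\), extends the action uniquely, applies the already-proved unital case, and then recovers the non-unital extension as the kernel of the projection to \(\mathbb Z\). The only cosmetic difference is that the paper routes the appeal through Theorem~\ref{act-pro-ring} (unital rings) rather than directly through Theorem~\ref{act-pro-alg}, but since Theorem~\ref{act-pro-ring} is itself deduced from Theorem~\ref{act-pro-alg} by taking \(K_i\) to be the image of \(\mathbb Z\) in \(R_i\) (which for \(\widetilde R_i\) is exactly \(\mathbb Z\)), the two routes coincide once unfolded.
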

\begin{proof}
Let \(\widetilde R_i = R_i \rtimes \mathbb Z\), then \(\widetilde R\) is a unital pro-ring and \(l\), \(r\) may be extended to the action of \(\widetilde R\) on \(S\) in the unique way. By theorem \ref{act-pro-ring}, the split extension \(S \to S \rtimes \widetilde R \leftrightarrows \widetilde R\) is isomorphic to a formal projective limit \(S' \to S' \rtimes \widetilde R' \leftrightarrows \widetilde R'\) such that \(\widetilde R' \to \widetilde R\) is induced by a cofinal functor. Then we may take \(R'_i = \Ker(\widetilde R'_i \to \mathbb Z)\) and \((S' \rtimes R')_i = \Ker((S' \rtimes \widetilde R')_i \to \mathbb Z)\). The second claim follows from the first one and lemma \ref{pro-ring}.
\end{proof}

\begin{theorem} \label{act-pro-crng}
Let \(K\), \(S\) be commutative pro-rings and \(l \colon K \times S \to S\) be an action of the commutative ring object \(K\) on the commutative ring object \(S\) in \(\Pro(\Set)\). Then the split extension \(S \to S \rtimes K \leftrightarrows K\) is isomorphic to a formal projective limit \(S' \to S' \rtimes K' \leftrightarrows K'\) of split extensions of commutative rings in such a way that \(K \to K'\) is induces by a cofinal functor between the index categories. The functor \(\Pro(\mathbf{CRng}) \to \mathrm{CRng}(\Pro(\Set))\) induces a bijection between the sets of actions of \(K\) on \(S\) in these categories.
\end{theorem}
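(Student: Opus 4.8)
The plan is to reduce Theorem~\ref{act-pro-crng} to the unital commutative case, Theorem~\ref{act-pro-cring}, by formally adjoining a unit, exactly as Theorem~\ref{act-pro-rng} was reduced to Theorem~\ref{act-pro-ring}. First I would form the levelwise Dorroh unitalizations $\widetilde K_i = K_i \rtimes \mathbb Z$, i.e.\ the unital commutative rings $K_i \oplus \mathbb Z$ in which $K_i$ sits as an ideal with quotient $\mathbb Z$. Since the transition maps of $K$ are ring homomorphisms, the $\widetilde K_i$ assemble into a unital commutative pro-ring $\widetilde K$, and there is a canonical level projection $\pi \colon \widetilde K \to \mathbb Z$ of unital commutative pro-rings with kernel $K$. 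The action $l \colon K \times S \to S$ extends in a unique way to an action $\widetilde l$ of the unital commutative ring object $\widetilde K$ on $S$ in $\Pro(\Set)$, the unit acting by the identity; after the standard reindexing this is just $\widetilde l_i((a,n),s) = l_i(a,s) + n s$. The semidirect product $S \rtimes \widetilde K$ computed in $\Pro(\mathbf{CRng})$ is then a unital commutative pro-ring, so $S \to S \rtimes \widetilde K \leftrightarrows \widetilde K$ is a split extension of a unital commutative pro-ring by a commutative pro-ring in the sense defined above.

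Applying Theorem~\ref{act-pro-cring} to $\widetilde l$ yields an isomorphic formal projective limit $S' \to S' \rtimes \widetilde K' \leftrightarrows \widetilde K'$ of split extensions of unital commutative rings by commutative rings, with $\widetilde K \to \widetilde K'$ induced by a cofinal functor between the index categories. Inspecting that theorem (through Theorem~\ref{act-pro-alg}) shows that $\widetilde K'$ is obtained from $\widetilde K$ purely by reindexing, so $\pi$ transports to a level projection $\pi' \colon \widetilde K' \to \mathbb Z$; in any case $\mathbb Z$ is a constant pro-ring, which makes the transport routine. I would then set $K'_i = \Ker\bigl(\pi'_i \colon \widetilde K'_i \to \mathbb Z\bigr)$ and $(S' \rtimes K')_i = \Ker\bigl((S' \rtimes \widetilde K')_i \to \mathbb Z\bigr)$, the latter being the composite with the projection onto $\widetilde K'_i$. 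Since $S'_i$ lies in this last kernel with quotient $K'_i$, and the splitting $\widetilde K' \to S' \rtimes \widetilde K'$ restricts to $K' \to S' \rtimes K'$, the data $S' \to S' \rtimes K' \leftrightarrows K'$ is a formal projective limit of split extensions of commutative rings. As kernels of homomorphisms of commutative rings are computed levelwise in the pro-completion, this split extension is isomorphic to $S \to S \rtimes K \leftrightarrows K$, and the cofinal-functor property for $K \to K'$ is inherited from that for $\widetilde K \to \widetilde K'$. The second claim, that $\Pro(\mathbf{CRng}) \to \mathrm{CRng}(\Pro(\Set))$ induces a bijection of action sets, then follows from this first claim together with Lemma~\ref{pro-ring}, as in the preceding theorems.

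The only step requiring genuine care is the middle one: verifying that the projections $\pi'_i \colon \widetilde K'_i \to \mathbb Z$ exist at each level and commute with the transition maps, so that passing to kernels produces a bona fide level sub-pro-ring and a bona fide split extension rather than merely a pro-subobject. This is the same bookkeeping used implicitly in the proof of Theorem~\ref{act-pro-rng}; it is harmless because $\mathbb Z$ is constant and $\pi$ is split epi at each level, so after replacing the index poset by a cofinal one the projections and their sections may be chosen compatibly.
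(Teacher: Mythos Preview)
Your proposal is correct and follows essentially the same approach as the paper: form the Dorroh unitalizations $\widetilde K_i = K_i \rtimes \mathbb Z$, extend the action, apply Theorem~\ref{act-pro-cring}, and then take levelwise kernels of the projection to $\mathbb Z$ to recover $K'$ and $S' \rtimes K'$. Your additional remark that $\widetilde K'$ is obtained from $\widetilde K$ purely by reindexing (so that the projection $\pi'$ is automatic at each level) makes explicit the bookkeeping that the paper leaves implicit, and is exactly right.
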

\begin{proof}
Let \(\widetilde K_i = K_i \rtimes \mathbb Z\), then \(\widetilde K\) is a unital commutative pro-ring and \(l\) may be extended to the action of \(\widetilde K\) on \(S\) in the unique way. By theorem \ref{act-pro-cring}, the split extension \(S \to S \rtimes \widetilde K \leftrightarrows \widetilde K\) is isomorphic to a formal projective limit \(S' \to S' \rtimes \widetilde K' \leftrightarrows \widetilde K'\) such that \(\widetilde K' \to \widetilde K\) is induced by a cofinal functor. Then we may take \(K'_i = \Ker(\widetilde K'_i \to \mathbb Z)\) and \((S' \rtimes K')_i = \Ker((S' \rtimes \widetilde K')_i \to \mathbb Z)\). The second claim follows from the first one and lemma \ref{pro-ring}.
\end{proof}

\bibliographystyle{plain}  
\bibliography{references}

\end{document}